
%

\documentclass{amsart}

\usepackage{blkarray}

\newtheorem{theorem}{Theorem}[section]
\newtheorem{lemma}[theorem]{Lemma}
\newtheorem{proposition}[theorem]{Proposition}
\theoremstyle{definition}
\newtheorem{example}[theorem]{Example}

\theoremstyle{remark}
\newtheorem{remark}[theorem]{Remark}

\numberwithin{equation}{section}

\usepackage{hyperref}
\hypersetup{
    colorlinks=true,
    urlcolor=black,
    linkcolor=blue,
    breaklinks=true
}

\begin{document}

\title{Further results on the real cubic truncated moment problem}


\author{Abdelaziz El Boukili}
\address{Laboratory of Analysis, Geometry and Applications (LAGA),
  Department of Mathematics,
  Faculty of Sciences, Ibn Tofail University,
  Kenitra, B.P. 133, Morocco}
\curraddr{}
\email{abdelaziz.elboukili@uit.ac.ma}
\thanks{}

\author{Amar Rhazi}
\address{Laboratory of Analysis, Geometry and Applications (LAGA),
  Department of Mathematics,
  Faculty of Sciences, Ibn Tofail University,
  Kenitra, B.P. 133, Morocco}
\curraddr{}
\email{amar.rhazi@uit.ac.ma}
\thanks{}


\author{Bouazza El Wahbi}
\address{Laboratory of Analysis, Geometry and Applications (LAGA),
  Department of Mathematics,
  Faculty of Sciences, Ibn Tofail University,
  Kenitra, B.P. 133, Morocco}
\curraddr{}
\email{bouazza.elwahbi@uit.ac.ma}
\thanks{}

\subjclass[2020]{Primary:44A60; Secondary: 47A57, 13B40}

\keywords{Cubic moment problem, moment matrix, flat extension}

\date{}

\dedicatory{}

\begin{abstract}
In this paper, we devote our interest to solving the real cubic truncated moment problem. We provide some results that allow to get a complete solution via a minimal representing measure. Some numerical examples are also  presented to emphasize the simplicity of our approach.
\end{abstract}

\maketitle

\section{Introduction}\label{MysecLabe1}
Let $ \beta \equiv \beta^{(m)} = \left \{\beta_{ij} \right \}_{i, j \in \mathbb{Z}_+, 0 \leq i + j \leq m} = \left \{\beta_{00},  \beta_{10}, \beta_{01}, \ldots, \beta_{m0}, \ldots, \beta_{0m} \right \} $ with $ \beta_{00}> 0 $ be a doubly indexed finite sequence of real numbers. The truncated real moment problem (TRMP) associated to $ \beta $ consists in finding the existence of a Borel positive  measure $ \mu $ supported in $ \mathbb{R}^2 $ such that
\begin{equation}
\beta_{i j}=\int x^{i} y^{j} d\mu, \quad (i,j\in \mathbb{Z}_+,~\  \leq i+j \leq m).\label{MyEqLabe1-1}
\end{equation}
The measure $ \mu $  in \eqref{MyEqLabe1-1} is called a representing measure for $\left \{\beta_{ij} \right \}_{i, j \in \mathbb{Z}_+, 0 \leq i + j \leq m}$, and the sequence $\beta $ a truncated moment sequence.

The truncated complex moment problem (TCMP) for a doubly indexed finite sequence $ s \equiv s^{(m)} = (s_{ij})_{i, j \in \mathbb{Z}_{+}, 0 \leq i + j \leq m} = \{s_{00}, s_{01}, s_{10}, \ldots, s_{0m}, \ldots, s_{m0} \} $ of complex numbers with $ s_{00}> 0 $ and $ s_{ji} = \overline{s_{ij}} $ concerns the existence of a positive Borel measure $ \sigma $ supported on $ \mathbb{C}$ such that, 
\begin{gather*}
s_{ij} = \int \bar{z}^{i}z^{j} d\sigma, \quad (i,j \in \mathbb{Z}_{+},~\ 0 \leq i+j \leq m).
\end{gather*}

C. Bayer and J. Teichmann \cite{BaTe}  proved that if a sequence of moments admits one or more representing measures, one of these must be of an atomic finite type. 

So if a real doubly indexed finite sequence $ \beta^{(m)} $ has a representing measure, it admits a finite atomic representing measure $ \mu $, i.e. we can write $ \mu: = \sum\limits_{k = 1}^{r} \rho_{k} \delta_{(x_k, y_k)} $ where the positive numbers $ \rho_{k} $ and the couples $ (x_k, y_k) $, $ 1 \leq k \leq r $ are called weights and atoms respectively of the measure $ \mu $ which is said to be $r$-atomic, and we have
$$ \beta_{ij} = \rho_{1} x_{1}^{i} y_{1}^{j} + \cdots + \rho_{r} x_{r}^{i} y_{r}^{j} = \int x^{i} y^{j} d \mu, \quad 0 \leq i + j \leq m. $$

Curto and Fialkow have shown in \cite[Proposition 1.12]{CuFi4} an equivalence between TRMP and TCMP in the case where $ m $ is even. So, we can talk simply of the truncated moment problem (TMP). In \cite{CuFi1, CuFi4, CuYo2, FiaNi}, the authors provide   solutions for the TMP when $ m = 2 $ and $ m = 4 $. Their approaches are generally based on positivity and the flat extension of the moment matrix $ \mathcal{M}(n)= (\beta_{i+j})_{i, j \in \mathbb{Z}_{+}, 0 \leq i + j \leq 2 n} $ associated to the sequence $ \beta $. For some even  values of $ m $ greater than 4, Curto and Fialkow \cite{CuFi6, CuFiM} used the theory of recursively generated and (or) recursively determined moment matrices. While for the case $ m = 6 $, it has been closely studied by Curto et al in \cite{CuFi6, CuFiM} and by Yoo \cite{CuYo1, Yoo1} in non-extremal case, where the rank of the associated moment matrix is strictly lower to the cardinal of the associated algebraic variety, and in extremal case when the rank and the cardinal of the algebraic variety are equal.

 For the odd cases, D. Kimsey stated in \cite{Kim1, Kim2} a complete solution of the cubic TCMP ($m =3 $). Based on the commutativity conditions of the matrices associated with the cubic moment sequence, he showed that there is a representative measure at most $4$ atomic. While for quintic TCMP ($m =5$), we find in \cite{EHIZ} an incomplete solution with a remaining case.

In \cite{CuYo4}, Curto and Yoo presented an alternative solution of the nonsingular cubic TRMP (i.e. $ \mathcal{M}(1)> 0 $) using the invariance under a degree-one transformation, positivity, flatness and recursively determined moment matrices.
 
In this article, we aim to provide a simple and complete alternative solution to the real cubic moment problem.

Let $ \beta \equiv \beta^{(3)} = \left \{\beta_{ij} \right \}_{0 \leq i + j \leq 3} $ be a doubly indexed sequence with real values given with $ \beta_{00}>0 $. As $ m $ is odd ($ m = 3 $), we group the data of the sequence $ \beta $ in the following two matrices,
 \begin{equation}
 \mathcal{M}(1):=\left(\begin{array}{ccc}
\beta_{00} & \beta_{10} & \beta_{01} \\
\beta_{10} & \beta_{20} & \beta_{11} \\
\beta_{01} & \beta_{11} & \beta_{02} 
 \end{array}\right)\text{ and } B(2):=\left(\begin{array}{ccc}
\beta_{20} & \beta_{11} & \beta_{02} \\
\beta_{30} & \beta_{21} & \beta_{12} \\
\beta_{21} & \beta_{12} & \beta_{03} 
\end{array}\right).\label{MyEqLabe1-2}
\end{equation} 
Then, we determine quartic moments $ \beta_{40}, \beta_{31}, \beta_{22}, \beta_{31}, \beta_{22}, \beta_{13}$ and $ \beta_{04}$ to construct a positive semidefinite extension $ \mathcal{M}(2)$ of the matrix $ \mathcal{M}(1)$ as follows,
 \begin{equation}
\mathcal{M}(2): \begin{pmatrix} 
\beta_{00} & \vert & \beta_{10} & \beta_{01} & \vert & \beta_{20} & \beta_{11} & \beta_{02}\\
     --    & - &      --    &      --    & - &      --    &      --    &    --    \\
\beta_{10} & \vert & \beta_{20} & \beta_{11} & \vert & \beta_{30} & \beta_{21} & \beta_{12}\\
\beta_{01} & \vert & \beta_{11} & \beta_{02} & \vert & \beta_{21} & \beta_{12} & \beta_{03}\\
     --    & - &      --    &      --    & - &      --    &      --    &    --    \\
\beta_{20} & \vert & \beta_{30} & \beta_{21} & \vert & \beta_{40} & \beta_{31} & \beta_{22}\\
\beta_{11} & \vert & \beta_{21} & \beta_{12} & \vert & \beta_{31} & \beta_{22} & \beta_{13}\\
\beta_{02} & \vert & \beta_{12} & \beta_{03} & \vert & \beta_{22} & \beta_{13} & \beta_{04}
 \end{pmatrix},\label{MyEqLabe1-3}
\end{equation} 
so that $ \operatorname{rank} \mathcal{M}(2) = \operatorname{rank} \mathcal{M}(1) $ or in the opposite case, $\mathcal{M}(2)$ can be extended to a positive semidefinite matrix $ \mathcal{M}(3)$ by calculating quintic moments ($ \beta_{50} $, $ \beta_{41} $, $ \beta_{32} $, $ \beta_{23} $, $ \beta_{14} $ and $ \beta_{05} $), and sixtics ($ \beta_{60} $, $ \beta_{51} $, $ \beta_{42} $, $ \beta_{33} $, $ \beta_{24} $, $ \beta_{15} $ and $ \beta_{06} $),
$$
\mathcal{M}(3)=\left(\begin{array}{ccccccccccccc}
\beta_{00} & \vert & \beta_{10} & \beta_{01}  & \vert &  \beta_{20} & \beta_{11} & \beta_{02} & \vert &  \beta_{30} & \beta_{21} & \beta_{12} & \beta_{03} \\
    --    &-&      --    &      --    & - &      --    &      --    &    --      & - &      --    &      --    &    -- &    --\\
\beta_{10} & \vert &  \beta_{20} & \beta_{11} & \vert &  \beta_{30} & \beta_{21} & \beta_{12} & \vert &  \beta_{40} & \beta_{31} & \beta_{22} & \beta_{13} \\
\beta_{01} & \vert &  \beta_{11} & \beta_{02} & \vert &  \beta_{21} & \beta_{12} & \beta_{03} & \vert &  \beta_{31} & \beta_{22} & \beta_{13} & \beta_{04} \\
 --    &-&      --    &      --    & - &      --    &      --    &    --      & - &      --    &      --    &    -- &    --\\
\beta_{20} & \vert &  \beta_{30} & \beta_{21} & \vert &  \beta_{40} & \beta_{31} & \beta_{22} & \vert &  \beta_{50} & \beta_{41} & \beta_{32} & \beta_{23} \\
\beta_{11} & \vert &  \beta_{21} & \beta_{12} & \vert &  \beta_{31} & \beta_{22} & \beta_{13} & \vert &  \beta_{41} & \beta_{32} & \beta_{23} & \beta_{14} \\
\beta_{02} & \vert &  \beta_{12} & \beta_{03} & \vert &  \beta_{22} & \beta_{13} & \beta_{04} & \vert &  \beta_{32} & \beta_{23} & \beta_{14} & \beta_{05} \\
 --    &-&      --    &      --    & - &      --    &      --    &    --      & - &      --    &      --    &    -- &    --\\
\beta_{30} & \vert & \beta_{40} & \beta_{31}  & \vert &  \beta_{50} & \beta_{41} & \beta_{32} & \vert &  \beta_{60} & \beta_{51} & \beta_{42} & \beta_{33} \\
\beta_{21} & \vert &  \beta_{31} & \beta_{22} & \vert &  \beta_{41} & \beta_{32} & \beta_{23} & \vert &  \beta_{51} & \beta_{42} & \beta_{33} & \beta_{24} \\
\beta_{12} & \vert &  \beta_{22} & \beta_{13} & \vert & \beta_{32} & \beta_{23} & \beta_{14}  & \vert &  \beta_{42} & \beta_{33} & \beta_{24} & \beta_{15} \\
\beta_{03} & \vert &  \beta_{13} & \beta_{04} & \vert &  \beta_{23} & \beta_{14} & \beta_{05} & \vert &  \beta_{33} & \beta_{24} & \beta_{15} & \beta_{06}\\ 
\end{array}\right),
$$
such that  $\operatorname{rank}\mathcal{M}(3)=\operatorname{rank}\mathcal{M}(2)$.

The remainder of this paper is organized as follows. In Section \ref{MysecLabe2}, we state some notations and some tools which will be used for solving the problems of the truncated moments. In Section \ref{MysecLabe3}, we present our main results illustrated by numerical examples.

\section{Preliminaries}\label{MysecLabe2}

In this section, we recall some results and notations that will be used in the sequel. 

We denote by $ M_{(p, q)} (\mathbb{K}) $, where $ \mathbb{K} = \mathbb{R} $ or $ \mathbb{C} $, the set of $ p \times q $ matrices and $\mathbb{R} [x, y] $, the space of polynomials with two indeterminates. $ \mathcal{P}_n $ will stand for the space of polynomials with two indeterminates, and real coefficients with total degree is lower than or equal to $ n $.

For a symmetric matrix $ A $, we write $ A \succeq 0 $ if $ A $ is positive semidefinite and $ A> 0 $ if $ A $ is positive definite.

To a sequence of moments $ \beta = \beta^{(2 n)} \equiv \left \{\beta_{ij} \right \}_{i + j \leq 2n} $, we associate the matrix moment $ \mathcal{M}(n) $, and if $ \mu $ is a representing measure  of $ \beta $ then for any polynomial $ P \equiv \sum\limits_{l, k} a_{lk} x^{l} y^{k} \in \mathbb{R} [x, y] $, we have,
   $$ 0 \leq \int|P(x,y)|^{2} d \mu=\sum_{l, k, l^{\prime}, k^{\prime}} a_{lk} a_{l ^{\prime} k^{\prime}} \int x^{l+k^{\prime}} y^{k+l^{\prime}}d \mu=\sum_{l, k, l ^{\prime}, k^{\prime}} a_{lk} a_{l^{\prime} k^{\prime}} \beta_{l+k^{\prime}, k+l^{\prime}}.$$
Hence, if $ \beta $ admits a representing measure, then the matrix $ \mathcal{M}(n) $ is positive semidefinite. The matrix $ \mathcal{M}(n) $ admits a decomposition by blocks $ \mathcal{M}(n) = (B [i, j])_{0 \leq i, j \leq n} $,
$$
\mathcal{M}(n):=\left(\begin{array}{cccc}
B[0,0] & B[0,1] & \dots & B[0, n] \\
B[1,0] & M[1,1] & \dots & B[1, n] \\
\vdots & \vdots & \ddots & \vdots \\
B[n, 0] & B[n, 1] & \dots & B[n, n]
\end{array}\right)
$$
where,
$$B[i, j]=\left(\begin{array}{cccc}
\beta_{i+j,0} & \beta_{i+j-1, 1} & \dots & \beta_{i,j} \\
\beta_{i+j-1,1 } & \beta_{i+j-2, 2} & \dots & \beta_{i-1,j+1} \\
\vdots & \vdots & \ddots & \vdots \\
\beta_{j, i} & \beta_{j-1, i+1} & \dots & \beta_{0, i+j}
\end{array}\right), \quad 0\leq i,j\leq n.$$

Thus, each block $ B[i,j] $ has the Hankel's property, i.e it is constant on each cross diagonal. Furthermore if we choose a labeling for the columns and rows of the moment matrix $ \mathcal{M}(n) $ by considering the lexicographic order of the monomials in degree, $1$, $X$, $Y$, $X^{2}$, $XY$, $Y^{2},\ldots$, $X^{n}$, $X^{n-1}Y,\ldots$, $XY^{n-1}$, $Y^{n}$, then as an example, the matrix $ \mathcal{M}(2) $ is written as,
\begin{equation}
\mathcal{M}(2)=\begin{blockarray}{ccccccccc}
{} & 1 & {} & X & Y & {} & X^{2} & X Y & Y^{2} \\
\begin{block}{c(cccccccc)}
1 & \beta_{00} & \vert & \beta_{10} & \beta_{01} & \vert & \beta_{20} & \beta_{11} & \beta_{02} \\
{} & -- & - & -- & -- & - & -- & -- & --  \\
X &\beta_{10} & \vert & \beta_{20} & \beta_{11} & \vert & \beta_{30} & \beta_{21} & \beta_{12}  \\
Y &\beta_{01} & \vert & \beta_{11} & \beta_{02} & \vert & \beta_{21} & \beta_{12} & \beta_{03}  \\
{} & -- & - & -- & -- & - & -- & -- & --  \\
X^{2} &\beta_{20} & \vert & \beta_{30} & \beta_{21} & \vert & \beta_{40} & \beta_{31} & \beta_{22}  \\
XY &\beta_{11} & \vert & \beta_{21} & \beta_{12} & \vert & \beta_{31} & \beta_{22} & \beta_{13}   \\
Y^{2} &\beta_{02} & \vert & \beta_{12} & \beta_{03} & \vert & \beta_{22} & \beta_{13} & \beta_{04}   \\
\end{block}
\end{blockarray}.\label{MyEqLabe2-1}
\end{equation}

In the following theorem, Smul'jan \cite{Smu} establishes a necessary and sufficient condition which ensures the positive extension and the flatness of a positive semidefinite matrix.
\begin{theorem}\label{MythmLabe2-1}
Let $ A \in \mathcal{M}_{(n, n)} (\mathbb{C}) $, $ B \in \mathcal{M}_{(n, p)} (\mathbb{C}) $, and $ C \in \mathcal{M}_{(p, p)} ( \mathbb{C}) $ be matrices of complex numbers. We have,
$$
\tilde{A}=\left(\begin{array}{cc}
A & B \\
B^{*} & C
\end{array}\right) \succeq 0 \Longleftrightarrow
\left\{\begin{array}{lll}
& A \succeq 0 \\
& B=A W\ (\text { for some } W\in \mathcal{M}_{(n,p)}(\mathbb{C}) ). \\
& C \succeq W^{*} A W
\end{array}\right.
$$
Moreover
$$\operatorname{rank}(\tilde{A})=\operatorname{rank}(A)\Longleftrightarrow C=W^{*}AW  \text{ for some }  W  \text{ such that } AW=B.$$
\end{theorem}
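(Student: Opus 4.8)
The plan is to reduce $\tilde A$ to block-diagonal form by a single congruence and then read off all three equivalent conditions, and the rank statement, from that normal form. The engine is the elementary identity: for any $W \in \mathcal{M}_{(n,p)}(\mathbb{C})$, setting $L := \begin{pmatrix} I_{n} & -W \\ 0 & I_{p} \end{pmatrix}$ (invertible, with inverse $\begin{pmatrix} I_{n} & W \\ 0 & I_{p} \end{pmatrix}$), block multiplication gives
\begin{equation*}
L^{*}\,\tilde A\, L = \begin{pmatrix} A & B - AW \\ B^{*}-W^{*}A & C - W^{*}B - B^{*}W + W^{*}AW \end{pmatrix},
\end{equation*}
which, as soon as $B = AW$ and $A = A^{*}$, collapses to $L^{*}\tilde A L = \begin{pmatrix} A & 0 \\ 0 & C - W^{*}AW \end{pmatrix}$. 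Since congruence by an invertible matrix preserves positive semidefiniteness, the direction $(\Leftarrow)$ is then immediate: under the three hypotheses $L^{*}\tilde A L = \operatorname{diag}(A,\, C - W^{*}AW) \succeq 0$, hence $\tilde A = (L^{-1})^{*}\big(L^{*}\tilde A L\big)L^{-1} \succeq 0$.

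For $(\Rightarrow)$, assume $\tilde A \succeq 0$. Testing the quadratic form $v \mapsto v^{*}\tilde A v$ on vectors of the form $v = (x,0)$ gives $A \succeq 0$ at once. The one step that is not pure block algebra is to show $\operatorname{Ran} B \subseteq \operatorname{Ran} A$, equivalently $\ker A \subseteq \ker B^{*}$: given $x \in \ker A$, evaluating the form at $v = (tx,\, y)$ with $t \in \mathbb{R}$ and arbitrary $y \in \mathbb{C}^{p}$ yields $0 \le v^{*}\tilde A v = 2t\,\operatorname{Re}(x^{*}By) + y^{*}Cy$ for every $t$, which forces $\operatorname{Re}(x^{*}By) = 0$; replacing $y$ by $iy$ removes the imaginary part as well, so $x^{*}By = 0$ for all $y$, i.e. $B^{*}x = 0$. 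Hence each column of $B$ lies in $\operatorname{Ran} A$, so $B = AW$ for some $W \in \mathcal{M}_{(n,p)}(\mathbb{C})$; feeding this $W$ into the identity above gives $\operatorname{diag}(A,\, C - W^{*}AW) = L^{*}\tilde A L \succeq 0$, whence $C - W^{*}AW \succeq 0$, that is $C \succeq W^{*}AW$.

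For the rank statement, I would first note that $W^{*}AW$ is independent of the choice of $W$ with $AW = B$: if $AW_{1} = AW_{2}$ then $A(W_{1}-W_{2}) = 0$, and taking adjoints also $(W_{1}-W_{2})^{*}A = 0$, so expanding $W_{1}^{*}AW_{1} - W_{2}^{*}AW_{2}$ shows it vanishes. Then, $L$ being invertible, $\operatorname{rank}\tilde A = \operatorname{rank}(L^{*}\tilde A L) = \operatorname{rank} A + \operatorname{rank}(C - W^{*}AW)$, so $\operatorname{rank}\tilde A = \operatorname{rank} A$ holds if and only if $\operatorname{rank}(C - W^{*}AW) = 0$, i.e. $C = W^{*}AW$ — equivalently for some, hence every, $W$ with $AW = B$. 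The only delicate point in the whole argument is the range-inclusion step in the second paragraph; everything else is routine manipulation of the congruence $L$.
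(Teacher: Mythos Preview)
Your proof is correct. The paper, however, does not prove Theorem~\ref{MythmLabe2-1} at all: it is stated with attribution to Smul'jan \cite{Smu} and no argument is given. So there is no ``paper's own proof'' to compare against.

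That said, your approach is very much in the spirit of how the paper \emph{uses} the result afterwards. The congruence identity you exploit is precisely the one the paper displays in \eqref{MyEqLabe2-3} for the special case $\tilde A=\mathcal{M}(n+1)$, and your rank formula $\operatorname{rank}\tilde A=\operatorname{rank}A+\operatorname{rank}(C-W^{*}AW)$ is exactly \eqref{MyEqLabe2-4}. Likewise, the two observations you isolate --- that $B=AW$ is equivalent to the range inclusion $\operatorname{Ran}B\subseteq\operatorname{Ran}A$, and that $W^{*}AW$ does not depend on the particular $W$ solving $AW=B$ --- are the content of the paper's Remark~\ref{MyremLabe2-2}, where the first is credited to Douglas' factorization lemma rather than argued directly as you do via the kernel inclusion. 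Your elementary derivation of $\ker A\subseteq\ker B^{*}$ from positivity of the quadratic form is a clean self-contained substitute for invoking Douglas.

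One small point worth making explicit: in the ``Moreover'' clause you tacitly assume that some $W$ with $AW=B$ exists before applying the congruence. For the implication $\operatorname{rank}\tilde A=\operatorname{rank}A\Rightarrow C=W^{*}AW$, this existence is itself a consequence of the rank hypothesis, since $\operatorname{rank}(A\ B)\le\operatorname{rank}\tilde A=\operatorname{rank}A$ forces the columns of $B$ into $\operatorname{Ran}A$. Adding that sentence would make the rank equivalence fully self-contained even without the standing assumption $\tilde A\succeq 0$.
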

When $\tilde{A}$ in Theorem \ref{MythmLabe2-1} has the same rank as $ A $, we say that $ \tilde{A} $ is a flat extension of $ A $. Moreover, if $ A \succeq 0 $ then each flat extension $ \tilde{A} $ of $ A $ is positive semidefinite.
\begin{remark} \label{MyremLabe2-2}~\
\begin{enumerate}
\item According to the factorization lemma of Douglas \cite{Doug}, the condition $ B = AW $ for a certain matrix $ W $ is equivalent to $ \operatorname{Ran}(B) \subseteq \operatorname{Ran}(A) $.
\item Since $A = A^{*}$, we obtain $W^{*}AW$ independent of $W$ provided that $B = AW$.
\end{enumerate}
\end{remark}
According to the Theorem \ref{MythmLabe2-1}, $\mathcal{M}(n) \succeq 0 $ admits a flat positive semidefinite extension
\begin{equation}
\mathcal{M}(n+1)=\left(\begin{array}{cc}
\mathcal{M}(n) & B(n+1) \\
B(n+1)^{T} & C(n+1)
\end{array}\right),\label{MyEqLabe2-2}
\end{equation} 
is equivalent to have the following two conditions,
\begin{enumerate}
\item $B(n+1)=\mathcal{M}(n)W$ for a matrix $W$;
\item $C(n+1)=W^{T}\mathcal{M}(n)W$ is a Hankel matrix.
\end{enumerate}
Let us notice also that we have
\begin{equation}
\left(\begin{array}{cc}
I_p & 0 \\
-W^{T} & I_q
\end{array}\right) \mathcal{M}(n+1)\left(\begin{array}{cc}
I_p & -W \\
0 & I_q
\end{array}\right)=\left(\begin{array}{cc}
\mathcal{M}(n) & 0 \\
0 & C(2)-W^{T} \mathcal{M}(n) W
\end{array}\right),\label{MyEqLabe2-3}
\end{equation}
where $ I_{p} $ and $ I_{q} $ are the unit matrices of respective orders $ p = n + 2 $ and $ q~=~ \dfrac{(n + 1)(n + 1)}{2} $. So from \eqref{MyEqLabe2-3}, we deduce that,
\begin{equation}
\operatorname{rank}\mathcal{M}(n+1)=\operatorname{rank}\mathcal{M}(n)+\operatorname{rank}\left(C(2)-W^{T} \mathcal{M}(n) W\right).\label{MyEqLabe2-4}
\end{equation}
We consider the Riesz functional $ L_{\beta}: \mathcal{P}_{2n} \longrightarrow \mathbb{R} $ defined by
$$ L_{\beta} \left (P = \sum_{0 \leq i + j \leq 2n} a_{ij} x^{i} y^{j} \right) = \sum_{0 \leq i + j \leq 2n} a_{ij} \beta_{ij}.$$

It is easy to see that if $ \hat{P} = (a_{ij}) $ and $ \hat {Q} = (b_{ij}) $ are respectively the column vectors of the polynomials $ P $ and $ Q $ in the basis of $ \mathcal{P}_{n} $  formed by monomials in lexicographical order in degrees $ 1, x, y, x^{2}, xy, y^{2}, \cdots, x^{n}, \cdots, y^{n} $, then the action of the matrix $ \mathcal{M}(n) $ on the polynomials $ P $ and $ Q $ is given by 
$$ \left \langle \mathcal{M}(n) \hat{P}, \hat{Q} \right \rangle: = L_{\beta} (PQ), \left (P, Q \in \mathcal{P}_{n} \right).$$
 
Therefore, the entry of the matrix $ \mathcal{M}(n) $ related to the row $ X^{k}Y^{l} $ and the column $ X^{k^{'}} Y^{l^{'}} $ is 
$$ \beta_{k^{'}+ k, l^{'} + l} = \left \langle X^{k^{'}} Y^{ l^{'}}, X^{k} Y^{l} \right \rangle.$$

The correspondence between $ \mathcal{P}_{n} $ and $ \mathcal{C}_{\mathcal{M}(n)} $, the column space of the matrix $ \mathcal{M}(n) $, is given by $ P(X, Y) = \mathcal{M}(n) \hat{P} $ where $ P = \sum\limits_{0 \leq i + j \leq 2n} a_{ij}x^{i}y^{j} $, that is, $ P(X, Y) $ is a linear combination of $ \mathcal{M}(n)$ columns.

Considering $ \mathcal{Z}(P) $ the set of zeros of $ P $, we define the algebraic variety of $ \mathcal{M}(n) $ by
 $$ \mathcal{V} \equiv \mathcal{V} (\mathcal{M}(n)): = \bigcap_{P \in \mathcal{P}_{n}} \mathcal{Z}(P).$$

The following two results will be useful to explicit the representing measure of $ \beta = \beta^{(2n)} $ when it exists.
\begin{proposition}\emph{(\cite[Proposition 3.1]{CuFi1})}.\label{MyproLabe2-3} Suppose that $ \mu $ is a representing measure of $ \beta $. For $ P \in \mathcal{P}_{n} $, we have
 $$\operatorname{supp}\mu \subseteq \mathcal{Z}(P)\Longleftrightarrow P(X,Y)=\mathbf{0}.$$
\end{proposition}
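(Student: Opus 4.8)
The plan is to reduce the equivalence to the single pairing identity recalled above, namely $\langle\mathcal M(n)\hat P,\hat Q\rangle=L_\beta(PQ)$ for $P,Q\in\mathcal P_n$, together with the fact that, $\mu$ being a representing measure of $\beta=\beta^{(2n)}$, one has $L_\beta(R)=\int R\,d\mu$ for every $R\in\mathcal P_{2n}$; since $\deg(PQ)\le 2n$, combining these gives $\langle\mathcal M(n)\hat P,\hat Q\rangle=\int PQ\,d\mu$ for all $P,Q\in\mathcal P_n$. Recall also that the assertion $P(X,Y)=\mathbf 0$ means precisely $\mathcal M(n)\hat P=\mathbf 0$.

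For the implication $(\Rightarrow)$, assume $\operatorname{supp}\mu\subseteq\mathcal Z(P)$, so that $P$ vanishes on $\operatorname{supp}\mu$. Then $\int PQ\,d\mu=0$ for \emph{every} $Q\in\mathcal P_n$, hence $\langle\mathcal M(n)\hat P,\hat Q\rangle=0$ for all such $Q$; letting $\hat Q$ run over the monomial basis of $\mathcal P_n$ shows that every entry of the column vector $\mathcal M(n)\hat P$ vanishes, i.e. $P(X,Y)=\mathbf 0$.

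For $(\Leftarrow)$, assume $\mathcal M(n)\hat P=\mathbf 0$ and specialize the identity at $Q=P$: then $\int P^{2}\,d\mu=\langle\mathcal M(n)\hat P,\hat P\rangle=0$. As $P^{2}\ge 0$ and $\mu$ is a positive measure, this forces $P=0$ $\mu$-almost everywhere. It remains to upgrade this to the genuine inclusion $\operatorname{supp}\mu\subseteq\mathcal Z(P)$: since $P$ is a polynomial, the set $U:=\{(x,y):P(x,y)\ne 0\}$ is open and $\mu(U)=0$, so $U$ is contained in the complement of $\operatorname{supp}\mu$ (the largest open $\mu$-null set); equivalently $\operatorname{supp}\mu\subseteq\{P=0\}=\mathcal Z(P)$.

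I expect the only real subtlety to be this last step of $(\Leftarrow)$, where one must not confuse ``$P=0$ $\mu$-a.e.'' with ``$\operatorname{supp}\mu\subseteq\mathcal Z(P)$''; the passage between the two rests on the continuity of $P$ together with the defining property of the topological support of a Borel measure. All the remaining steps are purely formal consequences of the pairing identity and of the positivity of $\mu$, and in particular no appeal to $\mathcal M(n)\succeq 0$ is actually required.
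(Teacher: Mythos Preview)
Your argument is correct. Note, however, that the present paper does not supply its own proof of this proposition: it is quoted verbatim from \cite[Proposition~3.1]{CuFi1} and used as a black box, so there is no in-paper proof to compare against. Your proof is the standard one (and essentially the argument in \cite{CuFi1}): the pairing identity $\langle\mathcal M(n)\hat P,\hat Q\rangle=\int PQ\,d\mu$ reduces $(\Rightarrow)$ to a trivial orthogonality statement, and for $(\Leftarrow)$ the specialization $Q=P$ together with positivity of $\mu$ and continuity of $P$ yields the support inclusion. Your care in distinguishing ``$P=0$ $\mu$-a.e.'' from ``$\operatorname{supp}\mu\subseteq\mathcal Z(P)$'' via the openness of $\{P\neq 0\}$ is exactly the point that is sometimes glossed over.
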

Using this proposition and by virtue of Corollary 3.7 in \cite{CuFi1}, we deduce
\begin{equation*}
\operatorname{supp}\mu\subseteq \mathcal{V}(M(n))\ \text{ and }\ \operatorname{rank}\mathcal{M}(n)\leq \operatorname{card}\operatorname{supp}\mu\leq v:=\operatorname{card}\mathcal{V}.
\end{equation*}
\begin{theorem}\label{MythmLabe2-4}\emph{(\cite[Theorem 5.13]{CuFi1})}. The truncated moment sequence $ \beta^{(2n)} $ has a $ \operatorname{rank}M(n)$-atomic representing measure  if and only if $ \mathcal{M}(n) \succeq 0 $ and $ \mathcal{M}(n) $ admits a flat extension $ \mathcal{M}(n + 1) $.
\end{theorem}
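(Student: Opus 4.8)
The plan is to prove the two implications separately; the forward (necessity) direction is routine, while the converse carries the real content.

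\emph{Necessity.} Suppose $\beta^{(2n)}$ admits an $r$-atomic representing measure $\mu=\sum_{k=1}^{r}\rho_k\delta_{(x_k,y_k)}$ with $r=\operatorname{rank}\mathcal{M}(n)$ and all $\rho_k>0$. Since $\mu$ is finitely supported, all its moments exist, and the numbers $\beta_{ij}=\int x^iy^j\,d\mu$ for $i+j\le 2n+2$ assemble into a matrix $\mathcal{M}(n+1)$ which is positive semidefinite by the computation of Section~\ref{MysecLabe2} carried out with $2n$ replaced by $2n+2$. Writing $V$ for the $r\times\binom{n+3}{2}$ matrix with $(k,(i,j))$ entry $x_k^iy_k^j$ and $D=\operatorname{diag}(\rho_1,\dots,\rho_r)$, one has $\mathcal{M}(n+1)=V^{T}DV$, hence $\operatorname{rank}\mathcal{M}(n+1)\le r=\operatorname{rank}\mathcal{M}(n)\le\operatorname{rank}\mathcal{M}(n+1)$. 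Thus $\mathcal{M}(n+1)$ is a flat extension of $\mathcal{M}(n)\succeq 0$.

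\emph{Sufficiency, Step 1 (propagation of flatness).} Assume $\mathcal{M}(n)\succeq 0$ has a flat extension $\mathcal{M}(n+1)$, and set $r=\operatorname{rank}\mathcal{M}(n)=\operatorname{rank}\mathcal{M}(n+1)$. First I would show that flatness propagates: $\mathcal{M}(n+1)$ admits a flat positive semidefinite extension $\mathcal{M}(n+2)$, and, inductively, a chain $\mathcal{M}(n+d)$, $d\ge 1$, all of rank $r$ (this is the propagation-of-flatness property of \cite{CuFi1}). Because $\mathcal{C}_{\mathcal{M}(n+1)}$ is $r$-dimensional it has a basis $\mathcal{B}$ consisting of $r$ monomial columns; in particular every column indexed by a monomial of degree $n+1$ is a prescribed linear combination of the columns in $\mathcal{B}$. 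These column relations determine a matrix $W$ and a candidate block $B(n+2)=\mathcal{M}(n+1)W$, and by Theorem~\ref{MythmLabe2-1} together with Remark~\ref{MyremLabe2-2} the extension $\mathcal{M}(n+2)$ exists and is automatically flat and positive semidefinite as soon as $C(n+2):=W^{T}\mathcal{M}(n+1)W$ carries the Hankel block structure required of a moment matrix. Iterating produces an infinite positive semidefinite moment matrix $\mathcal{M}(\infty)$ of rank $r$; denote by $L$ the Riesz functional of the corresponding infinite sequence, which restricts to $L_\beta$ on polynomials of degree $\le 2n$.

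\emph{Sufficiency, Step 2 (ideal, multiplication operators, measure).} Let $I=\{p\in\mathbb{R}[x,y]:p(X,Y)=\mathbf{0}\text{ in }\mathcal{C}_{\mathcal{M}(\infty)}\}$; this is an ideal and, since $\operatorname{rank}\mathcal{M}(\infty)=r$, the quotient $\mathcal{A}:=\mathbb{R}[x,y]/I$ has dimension $r$, with the classes of $\mathcal{B}$ as a basis. The bilinear form $\langle p+I,q+I\rangle:=L(pq)$ is well defined on $\mathcal{A}$ and, being the compression of $\mathcal{M}(\infty)\succeq 0$ to a basis, positive definite. The multiplication operators $M_x(p+I)=xp+I$ and $M_y(p+I)=yp+I$ commute and are self-adjoint for $\langle\cdot,\cdot\rangle$, since $\langle M_xp,q\rangle=L(xpq)=\langle p,M_xq\rangle$ and likewise for $M_y$. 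Commuting self-adjoint operators on a finite-dimensional real inner product space share an orthonormal eigenbasis $v_1,\dots,v_r$, say $M_xv_k=x_kv_k$, $M_yv_k=y_kv_k$ with $(x_k,y_k)\in\mathbb{R}^2$. Since $L(p)=\langle p(M_x,M_y)\hat{1},\hat{1}\rangle$ for every polynomial $p$, expanding $\hat{1}=\sum_k\langle\hat{1},v_k\rangle v_k$ yields
\[
L(p)=\sum_{k=1}^{r}\rho_k\,p(x_k,y_k),\qquad \rho_k:=\langle\hat{1},v_k\rangle^{2}\ge 0 .
\]
Dropping the indices with $\rho_k=0$ and merging coinciding pairs gives a finite atomic measure $\mu:=\sum\rho_k\delta_{(x_k,y_k)}$ with $\int p\,d\mu=L(p)$ for all $p$; taking $p=x^iy^j$ with $i+j\le 2n$ shows $\mu$ is a representing measure for $\beta^{(2n)}$. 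By the inequality $\operatorname{rank}\mathcal{M}(n)\le\operatorname{card}\operatorname{supp}\mu$ recalled after Proposition~\ref{MyproLabe2-3}, $\mu$ has at least $r$ atoms, hence exactly $r$; so all weights are positive, the atoms are distinct, and $\mu$ is a $\operatorname{rank}\mathcal{M}(n)$-atomic representing measure, as wanted.

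\emph{Main obstacle.} The delicate point is Step 1: proving that flatness propagates, i.e. that the moments forced by the column relations of $\mathcal{M}(n+1)$ really assemble into a Hankel-structured (hence genuine, and by Theorem~\ref{MythmLabe2-1} positive semidefinite) moment matrix at every higher level. The difficulty is that a monomial of degree $n+2$ can be reduced to the basis $\mathcal{B}$ along several chains of one-step multiplications by $x$ or $y$, and one must check that all of them assign it the same moment; equivalently, that the ideal generated by the degree-$(n+1)$ column relations is already stable under the multiplications used to pass from level $n+1$ to level $n+2$. Once this compatibility is in hand, the remaining steps are linear algebra and the spectral theorem for commuting self-adjoint operators.
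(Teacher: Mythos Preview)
The paper does not prove this theorem; it is quoted verbatim from \cite[Theorem 5.13]{CuFi1} and used as a black box in Section~\ref{MysecLabe3}. So there is no ``paper's own proof'' to compare against.

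Your outline is faithful to the original Curto--Fialkow argument: the necessity via the factorization $\mathcal{M}(n+1)=V^{T}DV$ is correct and complete, and your Step~2 (quotient algebra, commuting self-adjoint multiplication operators, joint diagonalization) is the standard functional-model construction and is sound. You have correctly located the real content in Step~1, the propagation of flatness: showing that the candidate block $C(n+2)=W^{T}\mathcal{M}(n+1)W$ is genuinely Hankel (equivalently, that the new moments are well defined independently of the reduction path). You describe the obstruction accurately but do not carry out the verification; as written, this remains a genuine gap, since without it one cannot iterate to obtain $\mathcal{M}(\infty)$, and the rest of the argument collapses. In the original memoir this step occupies several pages and relies on a careful analysis of the column relations in $\mathcal{M}(n+1)$ together with the block structure of the moment matrix; if you want a self-contained proof you will need to reproduce that analysis (or invoke \cite[Theorem~5.4]{CuFi1} explicitly).
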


If $ \mathcal{M}(n) $ admits a positive semidefinite extension $ \mathcal{M}(n + 1) $ such that $ \mathcal{M}(n + 1) $ is flat or has a flat extension  $ \mathcal{M}(n + 2) $, then $ \beta $ admits a representing measure $ \mu $ which is $r$-atomic where $ r = \operatorname{rank} \mathcal{M}(n + 1) $. By virtue of the flat extension Theorem \ref{MythmLabe2-4}, the algebraic variety $ \mathcal{V} $ of $ \mathcal{M}(n + 1) $ consists of exactly $ r $ points.

Let us put $ \mathcal{V} = \{(x_{1}, y_{1}), (x_{2}, y_{2}), \cdots, (x_{r}, y_{r}) \} $ and consider the Vandermonde matrix $ V $ given by
\begin{equation*}
V=\left(\begin{array}{ccccccc}
1 & 1 & 1 & \ldots & 1 & 1\\
x_{1} & x_{2} & x_{3} & \ldots & x_{r-1} & x_{r}\\
y_{1} & y_{2} & y_{3} & \ldots & y_{r-1} & y_{r}\\
x_{1}^2 & x_{2}^2 & x_{3}^2 & \ldots & x_{r-1}^2 & x_{r}^2\\
x_{1}y_{1} & x_{2} y_{2} & x_{3}y_{3} & \ldots & x_{r-1}y_{r-1} & x_{r}y_{r}\\
\vdots & \vdots & \vdots & \vdots & \vdots &\vdots\\
x_{1}^{n+1} & x_{2}^{n+1} & x_{3}^{n+1} & \ldots & x_{r-1}^{n+1} & x_{r}^{n+1}\\
\vdots & \vdots & \vdots & \vdots & \vdots &\vdots\\
y_{1}^{n+1} & y_{2}^{n+1} & y_{3}^{n+1} & \ldots & y_{r-1}^{n+1} & y_{r}^{n+1}
\end{array}\right).
\end{equation*}
If we denote by $ \mathcal{B} = \{c_{1}, c_{2}, \cdots, c_{r} \} $ the basis of $ \mathcal{C}_{\mathcal{M}(m)} $, the column space of $ M(n+1) $, and if $ V_{\vert \mathcal{B}} $ is the compression of $ V $ to the columns of $ \mathcal{B} $, then we can determine the weights $ \rho_{k} $ of the atoms $ \{(x_{k}, y_{k}) \} $;  $ (1\leq k \leq r) $ by solving the following Vandermonde system,
\begin{equation}
V_{\vert \mathcal{B}}(\rho_{1}\quad \rho_{2}\quad \cdots \quad \rho_{r})^{T}=(L_{\beta}(c_{1})\quad L_{\beta}(c_{2})\quad \cdots \quad L_{\beta}(c_{r}))^{T}.\label{MyEqLabe2-5}
\end{equation}
Hence, the representing measure of $\beta$ is $\mu=\sum\limits_{k=1}^{r} \rho_{k} \delta_{\left(x_{k}, y_{k}\right)}$.

We end this section with a reminder of recursively determined positive semidefinite moment matrices.\\ We denote by $ \mathcal{C}_{\mathcal{M}(n)} = \operatorname{span} \left \{1, X, Y, X^{2}, XY, Y^{2}, \cdots , X^{n}, \cdots, Y^{n} \right \} $ the column space of the matrix $ \mathcal{M}(n) $. We express the $ \mathcal{M}(n)$ columns linear dependence by the  following relations,
$$
P_{1}(X, Y)=\mathbf{0}, P_{2}(X, Y)=\mathbf{0}, \ldots, P_{k}(X, Y)=\mathbf{0},
$$
for some polynomials $P_{1}, P_{2}, \ldots, P_{k} \in \mathcal{P}_{n}, k \in \mathbb{N}$ and $k \leq \dfrac{(n+2)(n+1)}{2}$.

We recall that $ \mathcal{M}(n) $ is recursively generated \cite{FiaNi} if the following property is verified
\begin{equation}
P, Q, P Q \in \mathcal{P}_{n}, P(X, Y)=\mathbf{0} \Longrightarrow(P Q)(X, Y)=\mathbf{0}.\label{MyEqLabe2-6}
\end{equation}

According to \cite[Proposition 4.2]{Fia}, $ \mathcal{M}(n) $ is recursively determined  if it has the following column dependence relations,
\begin{align}
& X^{n}=P(X, Y)=\sum_{i+j \leq n-1} a_{i j} X^{i} Y^{j},\label{MyEqLabe2-7}\\
& Y^{n}=Q(X, Y)=\sum_{i+j \leq n, j \neq n} b_{i j} X^{i} Y^{j},\label{MyEqLabe2-8}
\end{align}
or by similar relations with reversing the roles of $ P $ and $ Q $.

In our approach for the cubic TRMP case, the following lemma will be very useful. 
\begin{lemma} \emph{(\cite[Lemma 2.4]{CuYo4})} \label{MylemLabe2-5} If $ \mathcal{M}(2) \succeq 0 $ and recursively determined (the relations \eqref{MyEqLabe2-7} and \eqref{MyEqLabe2-8} are verified with $ n = 2 $), then $ \mathcal{M}(2) $ admits a flat extension $ \mathcal{M}(3) $.
\end{lemma}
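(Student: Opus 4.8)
The plan is to show that the recursive determinacy relations \eqref{MyEqLabe2-7}--\eqref{MyEqLabe2-8} with $n=2$ force the existence of a well-defined block $B(3)$ and a Hankel block $C(3)$ meeting the two Smul'jan conditions recorded after Theorem \ref{MythmLabe2-1}; then Theorem \ref{MythmLabe2-4} (the flat extension theorem) produces the measure, but we only need the matrix-level statement here. First I would use the relation $X^{2}=P(X,Y)$ in $\mathcal{C}_{\mathcal{M}(2)}$ to express the column $X^{2}$ as a combination of $1,X,Y,XY,Y^{2}$; multiplying $P$ by $X$ and by $Y$ gives the new degree-three moments $\beta_{30},\beta_{21},\beta_{12},\beta_{03}$ that would appear in $B(3)$ as $X^{3}=XP(X,Y)$ and $X^{2}Y=YP(X,Y)$, and similarly $Y^{2}=Q(X,Y)$ gives $Y^{3}=YQ(X,Y)$. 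The point is that recursive determinacy means every column of $\mathcal{M}(3)$ indexed by a degree-three monomial is, a priori, a prescribed $\mathcal{P}_{2}$-combination of columns of $\mathcal{M}(2)$, so condition (1), $B(3)=\mathcal{M}(2)W$, holds by construction with $W$ read off from the coefficients of $P$ and $Q$ (and $XP$, $YP$, $YQ$).

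The substantive work is condition (2): one must check that $C(3)=W^{T}\mathcal{M}(2)W$ is genuinely Hankel, i.e. that the block is constant along cross-diagonals, equivalently that the two formally distinct ways of computing each entry $\beta_{ij}$ with $i+j\in\{4,5,6\}$ agree. Concretely, the entry indexed by $(X^{2}Y, X^{2})$ can be computed either as $\langle\mathcal{M}(2)\widehat{X^{2}Y},\widehat{X^{2}}\rangle$ using $X^{2}=P$, or via $X^{3}\cdot Y$ using $X^{3}=XP$, and these must coincide; the critical consistency is the ``mixed'' relation $X(Y^{2})=Y^{2}\cdot X$ versus $Y(XY)$, which is exactly where $P$ and $Q$ interact. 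I would verify this by a Riesz-functional computation: apply $L_{\beta}$ to the polynomial identities $x\cdot Q(x,y)=y\cdot(\text{something})$ obtained by reducing $xy^{2}$ two ways modulo the ideal generated by $x^{2}-P$ and $y^{2}-Q$, using that $\mathcal{M}(2)\succeq 0$ guarantees $L_{\beta}(r^{2})\ge 0$ for $r\in\mathcal{P}_{1}$ and hence $L_{\beta}(r\cdot(x^{2}-P))=0$, $L_{\beta}(r\cdot(y^{2}-Q))=0$ for every $r$ of degree $\le 1$. Positivity of $\mathcal{M}(2)$ is what turns the column relations into moment relations that propagate; this is the analogue of the recursiveness lemma used throughout Curto--Fialkow theory.

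I expect the main obstacle to be precisely the Hankel (well-definedness) check for the sixth-order moments $\beta_{60},\dots,\beta_{06}$: at degree six the element $\beta_{33}$ can be reached through $X^{3}\cdot Y^{3}$, $X^{2}Y\cdot XY^{2}$, and $(XY)\cdot(X^{2}Y^{2})$ in several ways, and one must confirm all reductions agree. The clean way to organize this is to note that \eqref{MyEqLabe2-7}--\eqref{MyEqLabe2-8} define a zero-dimensional ideal $I=(x^{2}-P,\,y^{2}-Q)$ with a monomial basis of $\mathbb{R}[x,y]/I$ contained in $\{1,x,y,xy\}$ (here the degree-two relations already reduce $x^{2}$ and $y^{2}$, and one further checks $xy^{2}, x^{2}y$ reduce consistently, which is the single compatibility condition to verify), so that $L_{\beta}$ extends uniquely from $\mathcal{P}_{2}$ to a linear functional $\widetilde{L}$ on $\mathbb{R}[x,y]$ that kills $I$; then $\mathcal{M}(3)$ is by definition the moment matrix of $\widetilde{L}$, its columns satisfy the same relations so $\operatorname{rank}\mathcal{M}(3)=\operatorname{rank}\mathcal{M}(2)$ by \eqref{MyEqLabe2-4}, and positivity of the flat extension follows from the remark after Theorem \ref{MythmLabe2-1}. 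If the single compatibility identity for $x^{2}y\equiv xy^{2}$ fails one would need to fall back on replacing one of $P,Q$ by the alternative pair mentioned before the lemma; I would flag that case and handle it symmetrically.
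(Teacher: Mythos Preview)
The paper does not prove Lemma~\ref{MylemLabe2-5}; it is imported verbatim from \cite[Lemma 2.4]{CuYo4} and used as a black box, so there is no in-text argument to compare your proposal against. The operational description in Remark~\ref{MyremLabe3-9} (propagate the column relations to build $B(3)$, then $C(3)$) is exactly your plan, so you are aligned with how the authors intend the lemma to be applied.

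Your outline is the standard Curto--Fialkow/Curto--Yoo argument and is correct in spirit, but several details are garbled. First, with $n=2$ relation~\eqref{MyEqLabe2-7} gives $X^{2}$ as a combination of $1,X,Y$ only (since $\deg P\le 1$), not of $1,X,Y,XY,Y^{2}$. Second, the ``new'' moments you manufacture for $B(3)$ are the quintic ones $\beta_{50},\ldots,\beta_{05}$, not the cubic $\beta_{30},\ldots,\beta_{03}$, which already sit in $\mathcal{M}(2)$. Third, your ``single compatibility condition'' is mis-located: $x^{2}y$ and $xy^{2}$ each reduce \emph{uniquely} modulo $\{x^{2}-P,\,y^{2}-Q\}$ (each is divisible by only one leading term), so there is nothing to check there; the first genuine overlap is at $x^{2}y^{2}$. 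Because $\deg P\le 1$ the leading monomials $x^{2}$ and $y^{2}$ are coprime, hence by Buchberger's first criterion $\{x^{2}-P,\,y^{2}-Q\}$ is automatically a Gr\"obner basis and $\mathbb{R}[x,y]/I$ has basis $\{1,x,y,xy\}$; no ``fallback case'' arises. Finally, the sentence ``if the single compatibility identity for $x^{2}y\equiv xy^{2}$ fails'' has no meaning as stated. With these corrections your extension of $L_{\beta}$ to $\widetilde L$ vanishing on $I$ is well defined, the resulting $\mathcal{M}(3)$ satisfies $B(3)=\mathcal{M}(2)W$ and $C(3)=W^{T}\mathcal{M}(2)W$ Hankel, and flatness follows from~\eqref{MyEqLabe2-4}.
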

Now, we are in a position to state our main results.

\section{Statement of findings}\label{MysecLabe3}

Let $ \beta = \beta^{(3)} \equiv \left \{\beta_ {ij} \right \}_{i + j \leq 3} $ be a real doubly indexed finite sequence  with $ \beta_{00}>0$. As mentioned in Section \ref{MysecLabe1}, we can not group all the data of the sequence $ \beta $ in a single square matrix, so we have distributed the elements of the sequence over two matrices $ \mathcal{M}(1)$ and $ B(2)$ (see \eqref{MyEqLabe1-2}). Thus, to solve the problem, we have to look first for a positive semidefinite extension $ \mathcal{M}(2)$ in \eqref{MyEqLabe1-3} of $ \mathcal{M}(1) $, and then test its flatness. 

This extension takes the form $ \mathcal{M}(2) = \left(\begin{array} {cc} \mathcal{M}(1) & B (2) \\ B (2)^{T } & C( 2) \end{array} \right) $ with $ C (2) $  a Hankel block containing the quartic moments,
\begin{equation}
C(2)=\left(\begin{array}{ccc}
\beta_{40} & \beta_{31} & \beta_{22} \\
\beta_{31} & \beta_{22} & \beta_{13}  \\
\beta_{22} & \beta_{13} & \beta_{04}  
\end{array}\right).\label{MyEqLabe3-1}
\end{equation}
and $ \operatorname{Ran}B(2) \subseteq \operatorname{Ran} \mathcal{M}(1)$, i.e. there exists a matrix $ W $ such that $ \mathcal{M}(1)W=B(2)$ according to Douglas factorization lemma (see (i) of Remark \ref{MyremLabe2-2}).

As $ \mathcal{M}(1) $ is symmetric then $W^{T} \mathcal{M}(1) W$ does also. 

So, we can write
\begin{equation}
W^{T} \mathcal{M}(1) W=\left(\begin{array}{ccc}
x & a & b \\
a & y & t  \\
b & t & z  
\end{array}\right),\label{MyEqLabe3-2}
\end{equation}
where $ a, b, t, x, y $ and $ z $ are real numbers. 

According to the Theorem \ref{MythmLabe2-1}, $ \mathcal{M}(2) \succeq 0 $,  is equivalent to get the next three conditions
\begin{gather}
(i). \  \mathcal{M}(1)\succeq 0,\quad (ii). \  \mathcal{M}(1)W=B(2)\quad \text{ and }\quad (iii). \  C (2)-W^{T} \mathcal{M}(1) W \succeq 0.\label{MyEqLabe3-3}
\end{gather} 

If the extension $ \mathcal{M}(2) $ is flat, then there exists a representing measure; otherwise, we try to construct a  flat extension $ \mathcal{M}(3) $ of $ \mathcal{ M}(2) $.

In this context and before stating our main results we need the following two lemmas. Let $C(2)$ and $W^{T}\mathcal{M}(1)W$ be as defined  in \eqref{MyEqLabe3-1} and \eqref{MyEqLabe3-2} respectively and which satisfy condition $(iii)$ of \eqref{MyEqLabe3-3}. 

\begin{lemma}\label{MylemLabe3-1}

The next equivalent holds, 
$$\operatorname{rank}(C(2)-W^{T}\mathcal{M}(1)W)=0 \text{ if and only if } y=b.$$
\end{lemma}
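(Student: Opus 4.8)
The plan is to note first that $\operatorname{rank}\bigl(C(2)-W^{T}\mathcal{M}(1)W\bigr)=0$ is nothing but the equality $C(2)=W^{T}\mathcal{M}(1)W$, and then to read off both implications from the explicit shapes in \eqref{MyEqLabe3-1} and \eqref{MyEqLabe3-2}. The only structural inputs I would use are: (a) $W^{T}\mathcal{M}(1)W$ is symmetric, since $\mathcal{M}(1)$ is — this is exactly why it appears in the form \eqref{MyEqLabe3-2}; and (b) $C(2)$ carries the Hankel pattern \eqref{MyEqLabe3-1}, in which the quartic moment $\beta_{22}$ simultaneously occupies the $(1,3)$, $(3,1)$ and $(2,2)$ slots. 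Granting (a) and (b), the forward implication is immediate: assuming $C(2)=W^{T}\mathcal{M}(1)W$, comparing the $(1,3)$ entries gives $\beta_{22}=b$ while comparing the $(2,2)$ entries gives $\beta_{22}=y$, hence $y=b$.

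For the converse I would start from $y=b$, so that \eqref{MyEqLabe3-2} becomes
$$W^{T}\mathcal{M}(1)W=\begin{pmatrix} x & a & b \\ a & b & t \\ b & t & z \end{pmatrix},$$
which is a Hankel matrix having precisely the shape of $C(2)$ in \eqref{MyEqLabe3-1}. Thus one may fix the quartic moments by $\beta_{40}:=x$, $\beta_{31}:=a$, $\beta_{22}:=b\ (=y)$, $\beta_{13}:=t$, $\beta_{04}:=z$, which forces $C(2)=W^{T}\mathcal{M}(1)W$; then $C(2)-W^{T}\mathcal{M}(1)W$ is the zero matrix and has rank $0$. I would also check that this choice is legitimate: condition $(iii)$ of \eqref{MyEqLabe3-3} then holds trivially (it reads $0\succeq 0$), and by Theorem \ref{MythmLabe2-1} the associated $\mathcal{M}(2)$ is automatically a positive semidefinite flat extension of $\mathcal{M}(1)$.

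The step that really carries the argument — and the one to handle with a little care — is the passage from the realizability of $W^{T}\mathcal{M}(1)W$ as the Hankel block $C(2)$ to the single scalar condition $y=b$. The observation is that, for a symmetric $3\times 3$ matrix of the form \eqref{MyEqLabe3-2}, being Hankel imposes only $y=b$: the remaining cross-diagonal equalities required of a $3\times 3$ Hankel matrix (equality of the $(1,2)$ and $(2,1)$ entries, and of the $(2,3)$ and $(3,2)$ entries) are already guaranteed by symmetry, while the $(1,1)$ and $(3,3)$ corners are unconstrained. So $y=b$ is exactly the residual obstruction, and no positivity hypothesis beyond the available $(iii)$ intervenes in either direction.
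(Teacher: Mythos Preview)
Your proof is correct and follows essentially the same approach as the paper: both directions rest on the observation that a symmetric $3\times 3$ matrix is Hankel precisely when its $(1,3)$ and $(2,2)$ entries agree, so that $y=b$ is the exact obstruction to taking $C(2)=W^{T}\mathcal{M}(1)W$. Your write-up is more explicit than the paper's (in particular your final paragraph spelling out why symmetry already enforces the other cross-diagonal equalities), but the underlying argument is identical.
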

\begin{proof}
If $\operatorname{rank}(C(2)-W^{T}\mathcal{M}(1)W)=0$ then $C(2)=W^{T}\mathcal{M}(1)W$.

Consequently, $\beta_{40}=x, \beta_{31}=a, \beta_{13}=c, \beta_{04}=z$ and $\beta_{22}=b=y$.

Conversely, if $ y = b $ then $ W^{T} \mathcal{M}(1) W $ is a Hankel matrix from which we take $ C(2)= W^{T} \mathcal{M} (1) W $.

Therefore, $ \operatorname{rank}(C (2) -W^{T} \mathcal{M}(1) W) = 0 $ and $ C (2) -W^{T} \mathcal{M}(1) W \succeq 0 $.
\end{proof}
From the Lemma \ref{MylemLabe3-1}, we deduce the following result.
\begin{lemma}\label{MylemLabe3-2}
$\operatorname{rank}(C(2)-W^{T}\mathcal{M}(1)W)\geq1$  if and only if $y\neq b$.
\end{lemma}
Now, we are in  a position to state our first result.
\begin{theorem}\label{MythmLabe3-3}
Let $ \beta = \beta^{(3)} $ be a real doubly indexed finite sequence, $ b $ and $ y $ as in \eqref{MyEqLabe3-3}. If $ \mathcal{M} (1) \succeq 0 $ and $ \operatorname{Ran}B(2)\subseteq \operatorname{Ran}\mathcal{M}(1) $ and $ b = y $, then $ \beta $ admits a unique representing measure $ \operatorname{rank} \mathcal{M}(1)$-atomic.
\end{theorem}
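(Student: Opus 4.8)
The plan is to reduce the cubic problem to the quartic flat-extension machinery already recalled in Section~\ref{MysecLabe2}. Since $b = y$, Lemma~\ref{MylemLabe3-1} gives $\operatorname{rank}(C(2) - W^{T}\mathcal{M}(1)W) = 0$, and the proof of that lemma shows we may take $C(2) := W^{T}\mathcal{M}(1)W$, a genuine Hankel block whose entries are admissible choices of the quartic moments $\beta_{40}, \beta_{31}, \beta_{22}, \beta_{13}, \beta_{04}$. With this choice the two Smul'jan conditions following \eqref{MyEqLabe2-2} hold: $B(2) = \mathcal{M}(1)W$ is condition (1), and $C(2) = W^{T}\mathcal{M}(1)W$ is Hankel, which is condition (2). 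Hence $\mathcal{M}(2)$ as in \eqref{MyEqLabe1-3} is a positive semidefinite \emph{flat} extension of $\mathcal{M}(1)$, so by \eqref{MyEqLabe2-4} (with $n=1$) we get $\operatorname{rank}\mathcal{M}(2) = \operatorname{rank}\mathcal{M}(1) =: r$.

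Next I would invoke Theorem~\ref{MythmLabe2-4}: to produce an $r$-atomic representing measure for $\beta^{(4)}$ (hence for $\beta^{(3)}$, since the quartic data are only auxiliary) it suffices that $\mathcal{M}(2) \succeq 0$ and that $\mathcal{M}(2)$ admits a flat extension $\mathcal{M}(3)$. Positivity of $\mathcal{M}(2)$ is the content of Theorem~\ref{MythmLabe2-1} together with \eqref{MyEqLabe3-3}: condition $(i)$ is hypothesized, condition $(ii)$ is the Douglas/range hypothesis, and condition $(iii)$ now reads $C(2) - W^{T}\mathcal{M}(1)W = 0 \succeq 0$. For the flat extension $\mathcal{M}(3)$, the key observation is that a flat extension is automatically recursively generated, so $\mathcal{M}(2)$ inherits column relations from $\mathcal{M}(1)$; more precisely, because $\mathcal{M}(2)$ is flat over $\mathcal{M}(1)$, every column of $\mathcal{M}(2)$ indexed by a degree-$2$ monomial is a linear combination of the columns $1, X, Y$, which in particular yields relations of the form \eqref{MyEqLabe2-7}--\eqref{MyEqLabe2-8} expressing $X^{2}$ and $Y^{2}$ in terms of lower-degree columns. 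Thus $\mathcal{M}(2)$ is recursively determined, and Lemma~\ref{MylemLabe2-5} delivers the desired flat extension $\mathcal{M}(3)$.

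Having the flat extension, Theorem~\ref{MythmLabe2-4} guarantees an $r$-atomic representing measure $\mu$; concretely, the algebraic variety $\mathcal{V}(\mathcal{M}(3))$ consists of exactly $r$ points and the weights are recovered from the Vandermonde system \eqref{MyEqLabe2-5}. For \emph{uniqueness}, I would argue that any representing measure $\nu$ of $\beta^{(3)}$ has $\operatorname{card}\operatorname{supp}\nu \geq \operatorname{rank}\mathcal{M}(1) = r$ (by the rank/support inequality recalled after Proposition~\ref{MyproLabe2-3}), while the moment matrix built from $\nu$ must be a positive extension of $\mathcal{M}(1)$ whose flatness forces $r$ atoms; more cleanly, one checks that every representing measure of $\beta^{(3)}$ generates the \emph{same} $\mathcal{M}(2)$ — the quartic block $C(2)$ is forced because $\operatorname{rank}$ cannot drop below $r$ and condition $(iii)$ with a flat $\mathcal{M}(1)$-extension pins $C(2) = W^{T}\mathcal{M}(1)W$ — and an $r$-atomic measure for a moment matrix of rank $r$ is unique (its atoms are $\mathcal{V}$ and its weights solve a nonsingular Vandermonde system).

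The main obstacle I anticipate is the uniqueness claim: showing that no representing measure with \emph{more} than $r$ atoms can exist requires ruling out the possibility that $\beta^{(3)}$ extends to a higher-rank $\mathcal{M}(2)'$ with a representing measure. The resolution should be that any such $\mathcal{M}(2)'$ still satisfies $(i)$--$(iii)$ of \eqref{MyEqLabe3-3} with the \emph{same} $W$ (since $\mathcal{M}(1)W = B(2)$ determines $W$ modulo $\ker \mathcal{M}(1)$, and $W^{T}\mathcal{M}(1)W$ is independent of that choice by Remark~\ref{MyremLabe2-2}(2)), so condition $(iii)$ becomes $C(2)' \succeq W^{T}\mathcal{M}(1)W$ with equality exactly when $b = y$ — hence the hypothesis $b = y$ forces $C(2)' = W^{T}\mathcal{M}(1)W$ and the extension is flat, ruling out the higher-rank case. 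I would present this argument carefully, as it is where the hypothesis $b = y$ is doing all the work.
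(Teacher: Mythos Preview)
Your existence argument mirrors the paper's: take $C(2):=W^{T}\mathcal{M}(1)W$ (Hankel since $b=y$, by Lemma~\ref{MylemLabe3-1}), observe that $\mathcal{M}(2)$ is then flat over $\mathcal{M}(1)$ and hence recursively determined, and invoke Lemma~\ref{MylemLabe2-5} followed by Theorem~\ref{MythmLabe2-4}. Your extra sentence explaining \emph{why} flatness yields relations of the form \eqref{MyEqLabe2-7}--\eqref{MyEqLabe2-8} is a useful clarification, since the paper simply asserts ``recursively determined''.

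The uniqueness argument, however, has a genuine gap. The numbers $b$ and $y$ are the $(1,3)$ and $(2,2)$ entries of $W^{T}\mathcal{M}(1)W$ (see \eqref{MyEqLabe3-2}); they are determined entirely by the data $\mathcal{M}(1)$ and $B(2)$ and do not depend on any choice of quartic block. Your claim that ``condition $(iii)$ becomes $C(2)'\succeq W^{T}\mathcal{M}(1)W$ with equality exactly when $b=y$'' is therefore false: $b=y$ says only that $W^{T}\mathcal{M}(1)W$ is Hankel, hence that the flat choice is \emph{available}; it does not force an arbitrary Hankel $C(2)'\succeq W^{T}\mathcal{M}(1)W$ to achieve equality. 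In particular, when $\mathcal{M}(1)>0$ one may take $C(2)'$ strictly larger, obtain $\operatorname{rank}\mathcal{M}(2)'>r$, and produce representing measures of $\beta^{(3)}$ with more than $r$ atoms --- so uniqueness among \emph{all} representing measures fails in general. The paper's terse ``the uniqueness comes from that of $C(2)$'' should be read as uniqueness of the $r$\emph{-atomic} measure: if $\nu$ is any $r$-atomic representing measure then $\operatorname{rank}\mathcal{M}_{\nu}(2)\le\operatorname{card}\operatorname{supp}\nu=r=\operatorname{rank}\mathcal{M}(1)$, which forces $\mathcal{M}_{\nu}(2)$ to be flat and hence pins its quartic block to $W^{T}\mathcal{M}(1)W$; after that the atoms and weights are determined by the variety and the Vandermonde system \eqref{MyEqLabe2-5}. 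You already have these ingredients; the fix is to derive flatness from the $r$-atom constraint on $\nu$, not from the hypothesis $b=y$.
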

\begin{proof}
If $b = y$ then by Lemma \ref{MylemLabe3-1}, we have $ C (2)= W^{T} \mathcal{M}(1) W $.

Therefore, $ \mathcal{M}(2) $ is a flat extension of $ \mathcal{M}(1) $.

Consequently it is positive semidefinite and recursively determined.

Hence, by applying Lemma \ref{MylemLabe2-5}, $ \mathcal{M}(2) $ admits a flat extension $ \mathcal{M}(3) $, therefore $\beta^{(4)}$, and particularly $\beta^{(3)}$ admits a unique representing measure $\operatorname{rank} \mathcal{M}(1)$-atomic. The uniqueness comes from that of $ C(2)$.
\end{proof}
Before giving our second result, we need the following proposition.
\begin{proposition}\label{MyproLabe3-4}
If $ \mathcal{M}(1) \succeq 0 $ with $r=\operatorname{rank}\mathcal{M}(1)=1 \text{ or } 2$ and $\operatorname{Ran}B(2)~\subseteq ~\operatorname{Ran}M(1)$ then $y=b$.
\end{proposition}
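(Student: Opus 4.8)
The plan is to exploit the structure of $\mathcal{M}(1)$ in low rank to force the Hankel property of $W^T\mathcal{M}(1)W$. Recall $b$ and $y$ are the entries in position $(1,3)$ and $(2,2)$ of $W^T\mathcal{M}(1)W$, so the claim $y=b$ says exactly that the $(XY,XY)$ entry equals the $(X^2,Y^2)$ entry of this $3\times 3$ symmetric matrix. By Remark \ref{MyremLabe2-2}(2), once $\mathcal{M}(1)W=B(2)$ holds the matrix $W^T\mathcal{M}(1)W$ is independent of the choice of $W$, so we are free to pick a convenient $W$; equivalently, we may work directly with the quadratic form $Q \mapsto L_\beta$-type pairings, i.e. with the unique positive semidefinite completion data.

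First I would dispose of the case $r=1$. Here $\mathcal{M}(1)=\frac{1}{\beta_{00}}vv^T$ where $v=(\beta_{00},\beta_{10},\beta_{01})^T$ (all of $X,Y$ are scalar multiples of $1$ in the column space), so $\operatorname{Ran}B(2)\subseteq\operatorname{Ran}\mathcal{M}(1)=\operatorname{span}\{v\}$ forces every column of $B(2)$ to be a multiple of $v$; writing $B(2)=\mathcal{M}(1)W$ and computing $W^T\mathcal{M}(1)W=W^T B(2)$, one checks directly that this is a rank-one Hankel matrix, hence $y=b$. Concretely, in the rank-one case $\mathcal{M}(1)$ has a representing measure supported at a single point $(x_0,y_0)=(\beta_{10}/\beta_{00},\beta_{01}/\beta_{00})$, and $\operatorname{Ran}B(2)\subseteq\operatorname{Ran}\mathcal{M}(1)$ together with the Hankel structure already present in $B(2)$ pins the quartic block down to $\beta_{00}(x_0^2,x_0 y_0,y_0^2)^{\otimes}$, which is Hankel; so $y=b=\beta_{00}x_0^2 y_0^2$ read off the diagonal.

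The substantive case is $r=2$. Then $\mathcal{C}_{\mathcal{M}(1)}$ is $2$-dimensional, so there is exactly one linear dependence relation among the columns $1,X,Y$: either $Y=\alpha 1+\gamma X$ (i.e. a polynomial $p(x,y)=y-\alpha-\gamma x$ with $p(X,Y)=\mathbf 0$), or $X=\alpha 1$ (the degenerate sub-case where $X$ and $1$ are dependent). I would treat the generic relation $Y-\alpha-\gamma X=\mathbf 0$ in $\mathcal{M}(1)$. The hypothesis $\operatorname{Ran}B(2)\subseteq\operatorname{Ran}\mathcal{M}(1)$ means the rows of $\mathcal{M}(2)$ indexed by $X^2,XY,Y^2$ lie in the span of the rows indexed by $1,X,Y$ modulo the column block, and in particular the flat extension $\begin{pmatrix}\mathcal{M}(1)&B(2)\\ B(2)^T& W^T\mathcal{M}(1)W\end{pmatrix}$ is a genuine flat moment matrix $\mathcal{M}(2)$ that is recursively generated; hence the relation $p(X,Y)=\mathbf 0$ propagates: $xp,\ yp\in\mathcal P_2$ and $xp(X,Y)=yp(X,Y)=\mathbf 0$. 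Reading off the column identities $XY=\alpha X+\gamma X^2$ and $Y^2=\alpha Y+\gamma XY$ and pairing with the row $XY$ gives
\begin{equation*}
y=\langle XY,XY\rangle=\alpha\langle X,XY\rangle+\gamma\langle X^2,XY\rangle=\alpha\beta_{21}+\gamma\beta_{31},
\end{equation*}
while pairing the identity for $Y^2$ with the row $X^2$ (or equivalently using $XY=\alpha X+\gamma X^2$ paired with $Y^2$) gives $b=\langle X^2,Y^2\rangle=\alpha\beta_{21}+\gamma\beta_{31}$ as well, whence $y=b$. I expect the main obstacle to be the bookkeeping that makes "recursively generated" legitimate here — i.e. justifying that under $\operatorname{Ran}B(2)\subseteq\operatorname{Ran}\mathcal{M}(1)$ the block matrix really is a flat extension and that the column relation of $\mathcal{M}(1)$ lifts to a column relation of $\mathcal{M}(2)$ (this is where one invokes flatness and the argument behind Lemma \ref{MylemLabe3-1}). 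The degenerate sub-cases ($X\equiv$ const, or $\mathcal M(1)$ having a zero row) are handled by the same pairing identities with $\gamma$ or $\alpha$ set appropriately, and are routine once the generic case is in place.
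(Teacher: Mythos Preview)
Your approach is genuinely different from the paper's and, once a subtle point is handled, it works. The paper proceeds by brute force: for $r=1$ it parametrizes $\mathcal{M}(1)$ and $B(2)$ by two scalars $c,d$, writes down an explicit $W$, multiplies out $W^{T}\mathcal{M}(1)W$, and reads off $y=b=c^{2}d^{2}$; for $r=2$ it splits into the sub-cases $X=c\cdot 1$ and $Y=d\cdot 1+e\cdot X$, writes $\mathcal{M}(1)$ and $B(2)$ in free parameters, exhibits $W$, and again multiplies everything out. Your argument instead identifies the unique linear relation in $\mathcal{C}_{\mathcal{M}(1)}$, propagates it to the degree-two columns, and computes $y$ and $b$ by pairing. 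This is cleaner and explains \emph{why} $y=b$ rather than merely verifying it symbolically; it would also scale to analogous statements for larger blocks, which the paper's method does not.

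There is, however, one step you must not gloss over. You write that the flat block matrix ``is a genuine flat moment matrix $\mathcal{M}(2)$ that is recursively generated; hence the relation $p(X,Y)=\mathbf 0$ propagates.'' This is circular as stated: the block $C(2)=W^{T}\mathcal{M}(1)W$ is a moment block (Hankel) precisely when $y=b$, which is what you are proving. You therefore cannot invoke the theorem ``flat moment matrices are recursively generated.'' The fix is direct and short: since $(-\alpha,-\gamma,1)\mathcal{M}(1)=0$ and $B(2)=\mathcal{M}(1)W$, one has $(-\alpha,-\gamma,1)B(2)=0$; combined with the Hankel structure of $B(2)$ this gives $XY=\alpha X+\gamma X^{2}$ and $Y^{2}=\alpha Y+\gamma XY$ already in the top three rows of the extended matrix. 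Because the bottom three rows are $W^{T}$ times the top three ($[B(2)^{T}\,|\,W^{T}\mathcal{M}(1)W]=W^{T}[\mathcal{M}(1)\,|\,B(2)]$), any column identity in the top propagates automatically to the full $6\times 6$ matrix. With that in hand your pairing computation is legitimate. A minor notational point: the quantity you call $\beta_{31}$ in ``$\gamma\beta_{31}$'' is the $(X^{2},XY)$ entry of $W^{T}\mathcal{M}(1)W$, i.e.\ the paper's $a$, not a given moment; both $y$ and $b$ equal $\alpha\beta_{21}+\gamma a$, so the conclusion stands.
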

\begin{proof} For both cases $ r=1$ or $ r=2$, we assume that $ \beta_{00} = 1$.
\begin{itemize}
\item[(i)] If $ r = 1$, and without loss of generality, we suppose that
\begin{gather*}
\mathcal{M}(1)=\left(\begin{array}{ccc}
1 & c & d  \\
c & c^2 & cd\\
d & cd & d^2 
\end{array}\right) \text{ and } B(2)=\left(\begin{array}{ccc}
c^2 & cd & d^2  \\
c^3 & c^2d & cd^2\\
c^2d & cd^2 & d^3 
\end{array}\right),
\end{gather*}
where $c$ and $d$ are both non-zero real numbers, since $c=d=0$ is a trivial case.

It is easy to check that $\mathcal{M}(1)\succeq 0$, $r=\operatorname{rank}\mathcal{M}(1)=1$ and that the linear dependency relations between the columns of $\mathcal{M}(1)$ are $X=c.1$ and $Y=d.1$.

A calculation shows us that $\mathcal{M}(1)W=B(2)$, with $ 
W=\left(\begin{array}{ccc}
c^2 & cd & d^2  \\
0 & 0 & 0\\
0 & 0 & 0 
\end{array}\right)
$, i.e., $\operatorname{Ran}B(2)\subseteq \operatorname{Ran}M(1)$ therefore
\begin{gather*}
W^{T}\mathcal{M}(1)W=\left(\begin{array}{ccc}
c^4 & c^3d & c^2d^2  \\
c^3d & c^2d^2 & cd^3\\
c^2d^2 & cd^3 & d^4 
\end{array}\right).
\end{gather*}
Hence, $y=b=c^2d^2$
\item[(ii)]  If $r=2$, there are four possibilities for the linear dependency relations between the columns of $\mathcal{M}(1)$, that we can group into two: $X=c.1$ or $Y=d.1+e.X$.

\item If $X=c.1$, then without loss of generality we assume that
\begin{gather*}
\mathcal{M}(1)=\left(\begin{array}{ccc}
1 & c & d  \\
c & c^2 & cd\\
d & cd & e 
\end{array}\right) \text{ and } B(2)=\left(\begin{array}{ccc}
c^2 & cd & e \\
c^3 & c^2d & ce\\
c^2d & ce & ed 
\end{array}\right),
\end{gather*}
 where $c, d, \in \mathbb{R}$ and $e>d^2$.

We check that $\mathcal{M}(1)\succeq 0$, $r=\operatorname{rank}\mathcal{M}(1)=2$ and that the linear dependence relation between the columns of $\mathcal{M}(1)$ is $X=c.1$.

A calculation shows that $\operatorname{Ran}B(2)\subseteq \operatorname{Ran}\mathcal{M}(1)$ with $W=\left(
\begin{array}{ccc}
 c^2 & 0 & e \\
 0 & 0 & 0 \\
 0 & c & 0 \\
\end{array}
\right).$
So
$$
W^{T}\mathcal{M}(1)W=\left(
\begin{array}{ccc}
 c^4 & c^3 d & c^2 e \\
 c^3 d & c^2 e & c d e \\
 c^2 e & c d e & e^2 \\
\end{array}
\right).$$
Hence, $y=b=ec^2$.

\item If $Y=d.1+e.X$ with $d$ and $e$ be non-zero real numbers, then without loss of generality we set 
$$\mathcal{M}(1)=\left(
\begin{array}{ccc}
 1 & c & c e+d \\
 c & f & c d+e f \\
 c e+d & c d+e f & d^2+e^2f+2 cde \\
\end{array}
\right), \text{ where } f, c\in \mathbb{R} \text{ and } f>c^2,$$
and
$$
B(2)=\left(
\begin{array}{ccc}
 f & c d+e f & d^2+e^2f+2 cde \\
 c f & c e f+d f & c d^2+c e^2 f+2 d e f \\
 c e f+d f & c d^2+c e^2 f+2 d e f & d^3+c e^3 f + 3 c d^2 e+3 d e^2 f \\
\end{array}
\right).$$
A simple check shows us that $\mathcal{M}(1)\succeq 0$, $r=\operatorname{rank}\mathcal{M}(1)=2$ and that the linear dependence relation between the columns of $\mathcal{M}(1)$ is $Y=d.1+e.X$. 

A calculation shows us that $\operatorname{Ran}B(2)\subseteq \operatorname{Ran}\mathcal{M}(1)$ with
$W=\left(
\begin{array}{ccc}
 f & e f & d^2+e^2 f \\
 0 & d & 2 d e \\
 0 & 0 & 0 \\
\end{array}
\right)$, and $W^{T}\mathcal{M}W$ as follows
$$
\left(
\begin{array}{ccc}
 f^2 & c d f+e f^2 & 2 c d e f+d^2 f+e^2 f^2 \\
 c d f+e f^2 & 2 c d e f+d^2 f+e^2 f^2 & c d^3+3 c d e^2 f+3 d^2 e f+e^3 f^2 \\
 2 c d e f+d^2 f^2+e^2 f^2 & c d^3+3 c d e^2 f+3 d^2 e f+e^3 f^2 & 4 e c d^3+4 e^3 c d f+6 e^2 d^2 f+d^4+e^4 f^2 \\
\end{array}
\right).$$
Hence $y=b=2 c d e f+d^2 f^2+e^2 f^2 $
\end{itemize}
The proof is thus completed. 
 \end{proof}
Applying Proposition \ref{MyproLabe3-4} and the Theorem \ref{MythmLabe3-3}, we establish directly the following theorem.
\begin{theorem}\label{MythmLabe3-5}
Let $ \beta = \beta^{(3)}$  be a real doubly indexed finite sequence. If $ \mathcal{M}(1) \succeq 0 $ and $ r = \operatorname{rank}\mathcal{M}(1) = 1 \text{ or } 2 $, with $ \operatorname{Ran}B(2) \subseteq \operatorname{Ran}M (1) $, then $ \beta $ admits a unique representing measure $ \operatorname{rank} \mathcal{M}(1)$-atomic.
\end{theorem}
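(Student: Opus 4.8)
The plan is to obtain Theorem \ref{MythmLabe3-5} as an immediate consequence of Proposition \ref{MyproLabe3-4} and Theorem \ref{MythmLabe3-3}, with no new computation required. First I would check that the three standing hypotheses of Theorem \ref{MythmLabe3-5} — namely $\mathcal{M}(1)\succeq 0$, $r=\operatorname{rank}\mathcal{M}(1)\in\{1,2\}$, and $\operatorname{Ran}B(2)\subseteq\operatorname{Ran}\mathcal{M}(1)$ — are exactly the hypotheses of Proposition \ref{MyproLabe3-4}. Applying that proposition yields the scalar identity $y=b$, where $b$ and $y$ are the entries of $W^{T}\mathcal{M}(1)W$ displayed in \eqref{MyEqLabe3-2} (here $W$ is any matrix with $\mathcal{M}(1)W=B(2)$, which exists by the Douglas factorization lemma applied to $\operatorname{Ran}B(2)\subseteq\operatorname{Ran}\mathcal{M}(1)$, and $W^{T}\mathcal{M}(1)W$ is independent of the choice of $W$ by Remark \ref{MyremLabe2-2}(2)).

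Second, I would feed this into Theorem \ref{MythmLabe3-3}, whose hypotheses are precisely $\mathcal{M}(1)\succeq 0$, $\operatorname{Ran}B(2)\subseteq\operatorname{Ran}\mathcal{M}(1)$, and $b=y$. The first two are assumed in Theorem \ref{MythmLabe3-5}, and the third has just been supplied by Proposition \ref{MyproLabe3-4}. Theorem \ref{MythmLabe3-3} then gives directly that $\beta$ admits a unique $\operatorname{rank}\mathcal{M}(1)$-atomic representing measure, which is the assertion of Theorem \ref{MythmLabe3-5}.

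Since the proof is nothing more than a chaining of two results already in hand, there is essentially no obstacle; the only point deserving a sentence of care is the reduction steps used inside Proposition \ref{MyproLabe3-4}, where one normalizes $\beta_{00}=1$ and brings $\mathcal{M}(1)$ to one of the listed canonical forms by a degree-one change of variables. I would remark that the equality $y=b$ (equivalently, the Hankel property of $W^{T}\mathcal{M}(1)W$, by Lemma \ref{MylemLabe3-1}) is invariant both under the scaling $\beta\mapsto\beta/\beta_{00}$ and under the affine substitutions used there, so the conclusion $y=b$ is genuinely available for an arbitrary $\beta^{(3)}$ satisfying the hypotheses. With that observation the deduction is complete.
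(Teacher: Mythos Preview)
Your proposal is correct and matches the paper's own argument exactly: the paper states Theorem \ref{MythmLabe3-5} as a direct consequence of Proposition \ref{MyproLabe3-4} (giving $y=b$) followed by Theorem \ref{MythmLabe3-3}. Your additional remark about the invariance of the condition $y=b$ under the normalizations used inside Proposition \ref{MyproLabe3-4} is a welcome clarification that the paper leaves implicit.
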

To highlight this last theorem, we present the following two numerical examples.
\begin{example} \label{MyexLabe3-6} The case where $ \mathcal{M}(1) $ is singular. \\
Let $ \beta^{(3)} $ be a real doubly indexed finite sequence with $ \beta_{00} = 5 $, $ \beta_{10} = 1 $, $ \beta_{01} = 2 $, $ \beta_{20} = 5$, $ \beta_{11} = -2 $, $ \beta_{02} = 2$, $ \beta_{30}= 1$, $ \beta_{21}=2 $ , $  \beta_{12} = - 2$ and $ \beta_{03} = 2$ . \\
The two matrices associated to $ \beta^{(3)}$ are,
$$
\mathcal{M}(1)=\left(
\begin{array}{ccc}
 5 & 1 & 2 \\
 1 & 5 & -2 \\
 2 & -2 & 2 \\
\end{array}
\right) \text{ and }B(2)=\left(
\begin{array}{ccc}
 5 & -2 & 2 \\
 1 & 2 & -2 \\
 2 & -2 & 2 \\
\end{array}
\right).
$$
Calculations, with Mathematica software show that $\mathcal{M}(1)\succeq 0$ and $\operatorname{rank}\mathcal{M}(1)=~2$.\\ 
$W$ and $W^{T}\mathcal{M}(1)W$ are given by,
$$
W=\left(
\begin{array}{ccc}
 1 & -\frac{1}{2} & \frac{1}{2} \\
 0 & \frac{1}{2} & -\frac{1}{2} \\
 0 & 0 & 0 \\
\end{array}
\right)\quad \text{ and } \quad W^{T}\mathcal{M}(1)W=\left(
\begin{array}{ccc}
 5 & -2 & 2 \\
 -2 & 2 & -2 \\
 2 & -2 & 2 \\
\end{array}
\right).
$$
We notice that $ b = y =2 $, so according to Theorem \ref{MythmLabe3-3}, $ \beta^{(3)}$ admits a unique representing measure $2$-atomic. By  choosing $ C (2) = W^{T} \mathcal{M}(1) W $, the matrix $\mathcal{M}(1)$ admits a flat  extension $ \mathcal{M}(2) $  ($ \operatorname{rank} \mathcal{M} (2 ) = \operatorname{rank} \mathcal{M} (1) = 2 $) given by
$$
 \mathcal{M}(2)=\left(
\begin{array}{cccccc}
 5 & 1 & 2 & 5 & -2 & 2 \\
 1 & 5 & -2 & 1 & 2 & -2 \\
 2 & -2 & 2 & 2 & -2 & 2 \\
 5 & 1 & 2 & 5 & -2 & 2 \\
 -2 & 2 & -2 & -2 & 2 & -2 \\
 2 & -2 & 2 & 2 & -2 & 2 \\
\end{array}
\right).$$ 
$ \mathcal{M}(2) $ columns dependence relations are,
$$ X + 2Y-1 = 0, X ^ 2-1 = 0, -X + 2XY + 1 = 0 \text { and } 2Y ^ 2 + X-1 = 0. $$
Thus, the algebraic variety of $ \mathcal{M}(2) $ is $ \mathcal{V} = \{(1,0); (- 1; 1) \} $, and by solving the Vandermonde system \eqref{MyEqLabe2-5}, we find the weights $ \rho_{1} = 3 $ and $ \rho_{2} = 2 $ related to the atoms $ (1,0) $ and $ (- 1; 1) $ respectively. Finally, the representing measure $2$-atomic of $ \beta^{(3)} $ is
 $$ \mu = 3 \delta_{(1,0)} + 2 \delta_{(- 1,1)}. $$
\end{example}
\begin{example}\label{MyexLabe3-7} The case where $ \mathcal{M}(1) $ is nonsingular. \\
Let $ \beta^{(3)} $ be a real doubly indexed finite sequence with $ \beta_{00}= 3 $, $  \beta_{10}=2 $, $ \beta_{01}=2$, $ \beta_{20}2=$, $ \beta_{11}=-1$, $\beta_{02}=2$, $\beta_{30}=2$, $\beta_{21}=-1$, $ \beta_{12}=1$ and $ \beta_{03}=0$. \\
The two matrices associated to $ \beta^{(3)} $ are,
$$
\mathcal{M}(1)=\left(
\begin{array}{ccc}
 3 & 2 & 0 \\
 2 & 2 & -1 \\
 0 & -1 & 2 \\
\end{array}
\right)\text{ and } B(2)=\left(
\begin{array}{ccc}
 2 & -1 & 2 \\
 2 & -1 & 1 \\
 -1 & 1 & 0 \\
\end{array}
\right).
$$
A computation of the nested determinants of $ \mathcal{M} (1) $ shows  that $ \mathcal{M}(1)> 0. $
 
Therefore, $ \operatorname {rank}\mathcal{M}(1)=3$ and
$$
 W=(\mathcal{M}(1))^{-1}B(2)=\left(
\begin{array}{ccc}
 0 & -1 & 2 \\
 1 & 1 & -2 \\
 0 & 1 & -1 \\
\end{array}
\right)\text{ and } W^{T}\mathcal{M}(1)W=\left(
\begin{array}{ccc}
 2 & -1 & 1 \\
 -1 & 1 & -1 \\
 1 & -1 & 2 \\
\end{array}
\right).
$$
Since $b=y=1$, then from Theorem \ref{MythmLabe3-3}, we deduce that $\beta^{(3)}$ admits a unique representing measure $3$-atomic and $ \mathcal{M}(1) $ admits a flat extension $ \mathcal{M}(2)$  ($ \operatorname{rank} \mathcal{M}(2 ) = \operatorname{rank} \mathcal{M}(1) = 3$).

By choosing $ C (2) = W ^ {T} \mathcal{M}(1) W$, we get
$$
 \mathcal{M}(2)=\left(
\begin{array}{cccccc}
 3 & 2 & 0 & 2 & -1 & 2 \\
 2 & 2 & -1 & 2 & -1 & 1 \\
 0 & -1 & 2 & -1 & 1 & 0 \\
 2 & 2 & -1 & 2 & -1 & 1 \\
 -1 & -1 & 1 & -1 & 1 & -1 \\
 2 & 1 & 0 & 1 & -1 & 2 \\
\end{array}
\right).$$
The $ \mathcal{M}(2) $ columns dependence relations are,
$$ X^2-X=0, Y^2+Y+2X-2=0 \text{ and } XY-Y-X+1=0.$$
Thus, the algebraic variety of $ \mathcal{M}(2) $ is $ \mathcal{V} = \{(0,1); (1; -1); (1; 0) \} $, and by solving the Vandermonde system \eqref{MyEqLabe2-5} we find the weights $ \rho_{1} = \rho_{2} = \rho_{3} = 1$.

Finally, the representing measure $ 3$-atomic of $ \beta^{(3)} $ is,
 $$ \mu = \delta_{(0,1)} + \delta_{(1, -1)} + \delta_{(1.0)}. $$
\end{example}

Let us now give our last result concerning the case $ \mathcal{M}(1)> 0 $ and $ y \neq b $,
\begin{theorem}\label{MythmLabe3-8}
Let $ \beta = \beta^{(3)} $ be a real doubly indexed finite sequence, $ b $ and $ y $ be defined as in \eqref{MyEqLabe3-2}. If $ \mathcal{M}(1)> 0 $, $ \operatorname{Ran}B(2) \subseteq \operatorname{Ran}\mathcal{M}(1) $ and $ b \neq y $, then $ \beta $ admits a representing measure $4$-atomic.
\end{theorem}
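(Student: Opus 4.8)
The plan is to exhibit a positive semidefinite moment matrix $\mathcal{M}(2)$ with $\operatorname{rank}\mathcal{M}(2)=4$ which admits a flat extension $\mathcal{M}(3)$; the flat extension theorem (Theorem~\ref{MythmLabe2-4}) then furnishes a $\operatorname{rank}\mathcal{M}(2)=4$-atomic representing measure for $\beta^{(4)}$, in particular for $\beta^{(3)}$. Since $\mathcal{M}(1)>0$, the inclusion $\operatorname{Ran}B(2)\subseteq\operatorname{Ran}\mathcal{M}(1)$ is automatic and $W:=\mathcal{M}(1)^{-1}B(2)$ is the unique matrix with $\mathcal{M}(1)W=B(2)$. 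By \eqref{MyEqLabe2-4}, $\operatorname{rank}\mathcal{M}(2)=3+\operatorname{rank}\bigl(C(2)-W^{T}\mathcal{M}(1)W\bigr)$, so we must choose the quartic moments so that $D:=C(2)-W^{T}\mathcal{M}(1)W$ is positive semidefinite of rank exactly one; equivalently $D=vv^{T}$ for some nonzero $v=(v_{1},v_{2},v_{3})\in\mathbb{R}^{3}$. Writing $W^{T}\mathcal{M}(1)W$ as in \eqref{MyEqLabe3-2}, the requirement that $C(2)$ be Hankel amounts to equality of its $(1,3)$ and $(2,2)$ entries, i.e. to the single scalar equation $v_{2}^{2}-v_{1}v_{3}=b-y$.

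Since $b\neq y$, we may take $v=(1,0,y-b)$ (so that $v_{1}=1$ and $v_{3}=y-b$ are both nonzero); set $C(2):=W^{T}\mathcal{M}(1)W+vv^{T}$. Then $D=vv^{T}\succeq0$ has rank one, hence $\mathcal{M}(2)\succeq0$ with $\operatorname{rank}\mathcal{M}(2)=4$. Because $\mathcal{M}(1)>0$, the columns $1,X,Y$ of $\mathcal{M}(2)$ are linearly independent, so $\mathcal{C}_{\mathcal{M}(2)}$ has precisely two dependence relations, both of degree $2$. A short computation shows that they are indexed by $v^{\perp}$: the first three coordinates of a relation force its linear part (through $W$) while the last three, its leading coefficients, must be orthogonal to $v$. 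As $v_{2}=0$ we have $(0,1,0)\in v^{\perp}$ and $(v_{3},0,-v_{1})\in v^{\perp}$, which yield the two relations $XY=a_{0}+a_{1}X+a_{2}Y$ and $(y-b)X^{2}-Y^{2}=c_{0}+c_{1}X+c_{2}Y$ in $\mathcal{C}_{\mathcal{M}(2)}$, with coefficients read off from the columns of $W$. (If $b>y$ one may instead take $v=(0,\sqrt{b-y},0)$, which gives the relations $X^{2}=(\text{polynomial of degree}\le1)$ and $Y^{2}=(\text{polynomial of degree}\le1)$, so that $\mathcal{M}(2)$ is recursively determined and Lemma~\ref{MylemLabe2-5} applies directly; the choice above treats both signs of $b-y$ uniformly.)

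Next one builds the candidate extension $\mathcal{M}(3)$. Multiplying the two relations by $X$ and by $Y$ and substituting, each of the columns $X^{3},X^{2}Y,XY^{2},Y^{3}$ is expressed as an element of $\mathcal{C}_{\mathcal{M}(2)}$: for instance $X^{2}Y=X\cdot XY=a_{0}X+a_{1}X^{2}+a_{2}XY$, and since $y-b\neq0$ the second relation can be solved both for $X^{2}$ and for $Y^{2}$, so that $X^{3}=X\cdot X^{2}$ and $Y^{3}=Y\cdot Y^{2}$ likewise lie in $\mathcal{C}_{\mathcal{M}(2)}$; one checks that the two ways of computing $X^{2}Y$ and $XY^{2}$ coincide. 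This determines a block $B(3)$ with $\operatorname{Ran}B(3)\subseteq\operatorname{Ran}\mathcal{M}(2)$, hence by Smul'jan's theorem (Theorem~\ref{MythmLabe2-1}) a unique $W_{3}$ with $\mathcal{M}(2)W_{3}=B(3)$, together with the block $C(3):=W_{3}^{T}\mathcal{M}(2)W_{3}$; the resulting $\mathcal{M}(3)=\left(\begin{smallmatrix}\mathcal{M}(2)&B(3)\\B(3)^{T}&C(3)\end{smallmatrix}\right)$ is then automatically positive semidefinite and of rank $\operatorname{rank}\mathcal{M}(2)=4$.

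The only genuine computation, and the step I expect to be the main obstacle, is verifying that $C(3)$ is a Hankel matrix, i.e. that the sixth-order moments produced in the previous step satisfy the consistency relations imposed by the Hankel structure (for example that the $(X^{3},XY^{2})$ and $(X^{2}Y,X^{2}Y)$ entries of $C(3)$ agree, and the two analogous pairs); this reduces to polynomial identities obtained by dividing the relevant products by the two column relations, and is where the value $v_{3}=y-b$ enters. Once this is established, $\mathcal{M}(3)$ is a flat extension of $\mathcal{M}(2)$, and by Theorem~\ref{MythmLabe2-4} the sequence $\beta^{(4)}$ — hence $\beta^{(3)}$ — admits a $\operatorname{rank}\mathcal{M}(2)=4$-atomic representing measure; its four atoms are the points of the algebraic variety $\mathcal{V}(\mathcal{M}(2))$, obtained by solving $xy=a_{0}+a_{1}x+a_{2}y$ together with $(y-b)x^{2}-y^{2}=c_{0}+c_{1}x+c_{2}y$, and the weights come from the Vandermonde system \eqref{MyEqLabe2-5}.
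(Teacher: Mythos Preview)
Your approach is essentially the paper's. The paper splits into two cases: for $b>y$ it takes $v=(0,\sqrt{b-y},0)$ so that $\mathcal{M}(2)$ is recursively determined and Lemma~\ref{MylemLabe2-5} yields a flat $\mathcal{M}(3)$ immediately (the shortcut you mention parenthetically); for $b<y$ it uses exactly your $v=(1,0,y-b)$, obtains the same two column relations $XY=(\text{linear})$ and $Y^{2}-(y-b)X^{2}=(\text{linear})$, and then builds $\mathcal{M}(3)$ by propagating these relations. Your uniform choice of $v$ is legitimate for both signs and merely trades the clean invocation of Lemma~\ref{MylemLabe2-5} in the $b>y$ case for a single argument. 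One small correction: the two expressions for $XY^{2}$ are not something one ``checks''---in the paper their compatibility is precisely what \emph{defines} $X^{3}$ (equation~\eqref{MyEqLabe3-15}), so there is nothing to verify at that stage. The step you identify as the main obstacle, namely that the resulting $B(3)$ and $C(3)$ have the required block-Hankel structure, is not carried out explicitly in the paper either; after writing down \eqref{MyEqLabe3-11}--\eqref{MyEqLabe3-15} the paper simply asserts the flat extension via Remark~\ref{MyremLabe3-9}.
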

\begin{proof}
Since $b\neq y$ then for appropriate quartic moments (the entries of block $C(2)$), and according to Lemma \ref{MylemLabe3-2}, we must have $ \operatorname{rank}(C(2)-W^{T}\mathcal{M}(1)W)\geq 1$.

If $ b> y $, with the quartic moments given by
\begin{gather}
\beta_{40}= x, \ \beta_{31} = a, \ \beta_{22} = b, \ \beta_{13} = c  \ \text{and} \ \beta_{04} = z, \label{MyEqLabe3-4}
\end{gather}
we have,
$ C (2)-W^{T} \mathcal{M}(1)W = \left(\begin{array}{ccc}
0 & 0 & 0 \\
0 & b-y & 0 \\
0 & 0 & 0
\end{array} \right)
$
which is positive semidefinite matrix of rank 1.

If $ b <y $, by taking
\begin{gather}
\beta_{40}= x+1, \ \beta_{31} = a, \ \beta_{22} = y, \ \beta_{13} = c \ \text {and} \ \beta_{04} = (y-b)^2-z, \label{MyEqLabe3-5}
\end{gather}
we get,
$
C(2)-W^{T} \mathcal{M}(1)W = \left (\begin{array}{ccc}
1 & 0 & y-b \\
0 & 0 & 0 \\
y-b & 0 & (y-b)^2
\end{array} \right)
$
which is also a positive semidefinite matrix of rank 1.

Consequently, when $y\neq b$ we have $ \operatorname{rank}(C(2)-W^{T}\mathcal{M}(1)W)=1$. As $ C(2)-W^{T} \mathcal{M}(1)W \succeq 0 $, according to Theorem \ref{MythmLabe2-1}, the extension matrix $ \mathcal{M}(2)$ of $ \mathcal{M}(1)$ that we have built is positive semidefinite. In addition, by the relation \eqref{MyEqLabe2-4} we have $ \operatorname{rank} \mathcal{M}(2)= 4 $.

Hence, there exists a column in $ \mathcal{M}(2) $ linearly independent with the columns $ 1, X $ and $ Y $. This column is  $ X^2 $ if $ \beta_{22} = y $ or $ XY $ if $ \beta_{40}= x $. In fact, \\
if $ \beta_{40}= x $, then  $\operatorname{det}\left(\begin{array}{lc}
\mathcal{M}(1) & (X^{2})\\
(X^{2})^{T} & x
\end{array}\right)=0$
 and if $ \beta_{22}= y $, $\operatorname{det}\left(\begin{array}{lc}
\mathcal{M}(1) & (XY)\\
(XY)^{T} & y
\end{array}\right)~=~0$.
 
For the case $ b> y $, the column linearly independent with the columns $ 1, X $ and $ Y $ is $ XY $ (see \ref{MyEqLabe3-4}). 

Hence, the columns $ X^2 $ and $ Y^2 $ are
\begin{equation}
 X^2=\alpha_{1}XY+a_{0}+a_{1}X+a_{2}Y \text{ and } Y^2=\alpha_{2}XY+b_{0}+b_{1}X+b_{2}Y.\label{MyEqLabe3-6}
\end{equation}
with 
\begin{equation}
\begin{array}{ll}
\alpha_{1} 
&=\dfrac{\operatorname{det}\left(\begin{array}{lc}
\mathcal{M}(1) & X^{2}\\
(XY)^{T} & \beta_{31}
\end{array}\right)}{\operatorname{det}\left(\begin{array}{lc}
\mathcal{M}(1) & XY \\
(XY)^{T} & \beta_{22} 
\end{array}\right)}\\
&=\dfrac{\operatorname{det}\left(\begin{array}{lc}
\mathcal{M}(1) &  X^{2} \\
(XY)^{T} & a
\end{array}\right)+(\beta_{31}-a) \operatorname{det}\left(\mathcal{M}(1)\right)}{\operatorname{det}\left(\begin{array}{lc}
\mathcal{M}(1) & XY\\
(XY)^{T} & y
\end{array}\right)+(\beta_{22}-y) \operatorname{det}\left(\mathcal{M}(1)\right)}\\
&=\dfrac{(\beta_{31}-a) \operatorname{det}\left(\mathcal{M}(1)\right)}{(\beta_{22}-y) \operatorname{det}\left(\mathcal{M}(1)\right)}\\
&=\dfrac{\beta_{31}-a}{\beta_{22}-y} = 0,\qquad (\beta_{31}=a \text{ and } \beta_{22}=b> y). 
\end{array} \label{MyEqLabe3-7}
\end{equation}
Similar calculations as in \eqref{MyEqLabe3-7} give  $ \alpha_{2} = \dfrac{\beta_{13}-c}{\beta_{22}-y} = 0 \ (\beta_{13} = c \text{ and } \beta_{22} = b> y) $. 

Finally, the relations \eqref{MyEqLabe3-6} become,
\begin{equation*}
 X^2=a_{0}+a_{1}X+a_{2}Y \text{ and }  Y^2=b_{0}+b_{1}X+b_{2}Y.
\end{equation*}
Consequently, $ \mathcal{M}(2) \succeq 0 $ and recursively determined.

Therefore, according to Lemma \ref{MylemLabe2-5}, $ \mathcal{M}(2) $ admits a flat extension, where $ \beta^{(4)} $ a fortiori $ \beta^{(3)} $ admits a representing measure $4$-atomic.

For the case $ b <y $, the column $ X^2 $  is linearly independent with the columns $ 1, X $ and $ Y $ in $ \mathcal{M}(2)$ (see \ref{MyEqLabe3-4}). 

Let us take $ \beta_{ 22} = y $ and $ \beta_{40} \neq x $ \eqref{MyEqLabe3-5}.

So, the columns $ XY $ and $ Y^2$ can be written as follows,
\begin{equation}
XY= \alpha_{2}X^2+c_{0}+c_{1}X+c_{2}Y \text{ and } Y^2=\alpha_{3}X^2+d_{0}+d_{1}X+d_{2}Y. \label{MyEqLabe3-8}
\end{equation}
By calculations as in \eqref{MyEqLabe3-7}, we find 
$$\alpha_{2} =\frac{\beta_{31}-a}{\beta_{40}-x}= 0 \text{ and }  \alpha_{3} = \frac{\beta_{22}-b}{\beta_{40}-x} \neq 0, (\beta_{40}\neq x, \beta_{31}=a \text{ and }  \beta_{22}=y\neq b).$$
So the relations \eqref{MyEqLabe3-8} become as follows,
\begin{align}
& XY= c_{0}+c_{1}X+c_{2}Y,\label{MyEqLabe3-9}\\
& Y^2=(y-b)X^2+d_{0}+d_{1}X+d_{2}Y.\label{MyEqLabe3-10}
\end{align}
Now, we focus on constructing the positive semidefinite extension $ \mathcal{M}(3) $ of $ \mathcal{M}(2) $.

As the condition of the recursivily generated   must be respected, then  from the relations \eqref{MyEqLabe3-9} and \eqref{MyEqLabe3-10} and by functional calculus, we obtain
\begin{align}
& X^2Y= c_{0}X+c_{1}X^2+c_{2}XY, \label{MyEqLabe3-11}\\
& XY^2=c_{0}Y+c_{1}XY+c_{2}Y^2, \label{MyEqLabe3-12}\\
& XY^2=(y-b)X^3+d_{0}X+d_{1}X^2+d_{2}XY. \label{MyEqLabe3-13}
\end{align}
Using the relations \eqref{MyEqLabe3-9}-\eqref{MyEqLabe3-11}, we get  
\begin{multline}
Y^3=[c_{0}(c_{2}y-c_{2}b+d_{1})+d_{0}d_{2}]+[(c_{0}+c_{1}c_{2})(y-b)+c_{1}d_{1}+d_{1}d_{2}]X+\\
[c_{2}(c_{2}y-c_{2}b+d_{1})+d_{0}+d_{2}^2]Y+(c_{1}+d_{2})(y-b)X^2 \label{MyEqLabe3-14}
\end{multline}
Noticing that the column $ XY^2 $ is defined by the relations \eqref{MyEqLabe3-12} and \eqref{MyEqLabe3-13}, then by the property  \eqref{MyEqLabe2-6}, these two relations must be similar.

Furthermore, since $ y \neq b $ then 
\begin{equation}
 X^3=-\left(\frac{d_{0}}{y-b}\right)X+\left(\frac{c_{0}}{y-b}\right)Y-\left(\frac{d_{1}}{y-b}\right)X^2+\left(\frac{c_{1}-d_{2}}{y-b}\right)XY+c_{2}Y^2. \label{MyEqLabe3-15}
\end{equation}

Thus, using the definition of the columns $ X^3, X^2Y, XY^2 $ and $ Y^3 $, and by the relations \eqref{MyEqLabe3-15}, \eqref{MyEqLabe3-11}, \eqref{MyEqLabe3-12} or \eqref{MyEqLabe3-13} and \eqref{MyEqLabe3-14} respectively, we complete the construction of the matrix $ \mathcal{M}(3)$ as detailed in Remark \ref{MyremLabe3-9} below.

Finally, Since these columns are written as a linear combination of columns associated to monomials of degree at most 2, then $ \mathcal{M}(3) $ is a flat extension of $ \mathcal{M}(2)$.
 
Whence, $ \beta^{(3)} $ admits a finite measure $4$-atomic.
\end{proof}
\begin{remark}\label{MyremLabe3-9}
Practically, if $ \mathcal{M}(2) $ is recursively determined, to construct the matrix $ \mathcal{M}(3)$, we define the columns $ X^3 $ and $ Y^3 $ by the functional calculation and the definitions of the columns $ X^2 $ and $ Y^2 $. Then, we compute the quintic moments in the columns $ X^3 $ and $ Y^3 $. This allows us to build the Hankel block $ B [2,3]$. Thus, the construction of the block $ B (3) $ is completed.

 By transposing the latter, one can construct the block $ C (3) $ as previously, which completes the construction of $ \mathcal{M} (3) $. 
 
 If $ \mathcal{M} (2) $ is not recursively determined, then with the relations \eqref{MyEqLabe3-11}, \eqref{MyEqLabe3-12} or \eqref{MyEqLabe3-13}, \eqref{MyEqLabe3-14} and \eqref{MyEqLabe3-15}, we start calculating the quintic moments without conflict in order to complete the construction of the block $ B (3) $, then we transpose $ B (3) $ to calculate $ C (3) $, which contains the sixth moments.
\end{remark}

Now, we give two numerical examples illustrating both cases in Theorem~\ref{MythmLabe3-8}.
\begin{example}\label{MyexLabe3-10} Case where $b>y$

Let $ \beta^{(3)} $ be the be a real doubly indexed finite sequence  defined by
$\beta_{00}=2$, $\beta_{10}=1$, $\beta_{01}=1$, $\beta_{20}=2$, $\beta_{11}=1$, $\beta_{02}=2$,  $\beta_{30}=1$, $\beta_{21}=2$, $\beta_{12}=1$ and $\beta_{03}=2$.

The two matrices associated to $ \beta^{(3)} $ are,
$$
\mathcal{M}(1)=\left(
\begin{array}{ccc}
 2 & 1 & 1 \\
 1 & 2 & 1 \\
 1 & 1 & 2 
\end{array}\right)
\text{ and } B(2)=\left(
\begin{array}{ccc}
 2 & 1 & 2 \\
 1 & 2 & 1 \\
 2 & 1 & 2 
\end{array}
\right).$$
Calculations show that $ \mathcal{M}(1)> 0$ and $ \operatorname{rank} \mathcal{M}(1)=3$.
 
So,
$$
W=\mathcal{M}(1)^{-1}B(2)=\left(
\begin{array}{ccc}
 \frac{3}{4} & 0 & \frac{3}{4} \\
 \frac{-1}{4} & 1 & \frac{-1}{4} \\
 \frac{3}{4} & 0 & \frac{3}{4} \\
\end{array}
\right)\text{ and } W^{T}\mathcal{M}(1)W=\left(
\begin{array}{ccc}
 \frac{11}{4} & 1 & \frac{11}{4}\\
 1 & 2 & 1 \\
 \frac{11}{4} & 1 & \frac{11}{4}\\
\end{array}
\right).$$
We have  $b= \frac{11}{4}>y=2$.

 By the relation \eqref{MyEqLabe3-4}, we set $ C (2)=\left(
\begin{array}{ccc}
 \frac{11}{4} & 1 & \frac{11}{4} \\
 1 & \frac{11}{4} & 1 \\
 \frac{11}{4} & 1 & \frac{11}{4} \\
\end{array}
\right)$.

 Then the extension $ \mathcal{M}(2) $ of $ \mathcal{M}(1) $ is,
$$
\mathcal{M}(2)=\left(
\begin{array}{cccccc}
 2 & 1 & 1 & 2 & 1 & 2 \\
 1 & 2 & 1 & 1 & 2 & 1 \\
 1 & 1 & 2 & 2 & 1 & 2 \\
 2 & 1 & 2 & \frac{11}{4} & 1 & \frac{11}{4} \\
 1 & 2 & 1 & 1 & \frac{11}{4} & 1 \\
 2 & 1 & 2 & \frac{11}{4} & 1 & \frac{11}{4} \\
\end{array}
\right).$$
The  computation of the nested determinants shows that $ \mathcal{M}(2) \succeq 0 $ and the dependency relations between the columns are,
$$ X^2=\frac{3}{4}-\frac{1}{4}X+\frac{3}{4}Y\text{ and } Y^2=\frac{3}{4}-\frac{1}{4}X+\frac{3}{4}Y. $$
Further, the algebraic variety of $ \mathcal{M}(2) $ is,
 $$\mathcal{V}=\left\{\left(\frac{-3}{2},\frac{3}{2}\right);\left(\frac{1}{2},\frac{-1}{2}\right);\left(\frac{1-\sqrt{13}}{4},\frac{1-\sqrt{13}}{4}\right);\left(\frac{1+\sqrt{13}}{4},\frac{1+\sqrt{13}}{4}\right)\right\}.$$
With solving the Vandermonde system \eqref{MyEqLabe2-5}, we get the weights 
$$\rho_{1}=\frac{1}{6}, \rho_{2}=\frac{1}{2}, \rho_{3}=\frac{2}{39} \left(13-2 \sqrt{13}\right)\text{ and } \rho_{4}=\frac{2}{39} \left(2 \sqrt{13}+13\right),$$ 
related respectively to the following atoms 
$$\left(\frac{-3}{2},\frac{3}{2}\right), \left(\frac{1}{2},\frac{-1}{2}\right),  \left(\frac{1-\sqrt{13}}{4},\frac{1-\sqrt{13}}{4}\right)\text{ and } \left(\frac{1+\sqrt{13}}{4},\frac{1+\sqrt{13}}{4}\right).$$ 

Finally the $4$-atomic measure of $ \beta^{(3)} $ is,
$$\mu=\frac{1}{6}\delta_{\left(\frac{-3}{2},\frac{3}{2}\right)}+\frac{1}{2}\delta_{\left(\frac{1}{2},\frac{-1}{2}\right)}+ \frac{26-4 \sqrt{13}}{39}\delta_{\left(\frac{1-\sqrt{13}}{4},\frac{1-\sqrt{13}}{4}\right)}+\frac{26+4 \sqrt{13}}{39}\delta_{\left(\frac{1+\sqrt{13}}{4},\frac{1+\sqrt{13}}{4}\right)}.$$
Using the technique described in Remark \ref{MyremLabe3-9}, we construct $\mathcal{M}(3)$ and we obtain
\renewcommand{\arraystretch}{1.5}
$$
\mathcal{M}(3)=\left(
\begin{array}{cccccccccc}
 2 & 1 & 1 & 2 & 1 & 2 & 1 & 2 & 1 & 2 \\
 1 & 2 & 1 & 1 & 2 & 1 & \frac{11}{4} & 1 & \frac{11}{4} & 1 \\
 1 & 1 & 2 & 2 & 1 & 2 & 1 & \frac{11}{4} & 1 & \frac{11}{4} \\
 2 & 1 & 2 & \frac{11}{4} & 1 & \frac{11}{4} & \frac{13}{16} & \frac{53}{16} & \frac{13}{16} & \frac{53}{16} \\
 1 & 2 & 1 & 1 & \frac{11}{4} & 1 & \frac{53}{16} & \frac{13}{16} & \frac{53}{16} & \frac{13}{16} \\
 2 & 1 & 2 & \frac{11}{4} & 1 & \frac{11}{4} & \frac{13}{16} & \frac{53}{16} & \frac{13}{16} & \frac{53}{16} \\
 1 & \frac{11}{4} & 1 & \frac{13}{16} & \frac{53}{16} & \frac{13}{16} & \frac{139}{32} & \frac{17}{32} & \frac{139}{32} & \frac{17}{32} \\
 2 & 1 & \frac{11}{4} & \frac{53}{16} & \frac{13}{16} & \frac{53}{16} & \frac{17}{32} & \frac{139}{32} & \frac{17}{32} & \frac{139}{32} \\
 1 & \frac{11}{4} & 1 & \frac{13}{16} & \frac{53}{16} & \frac{13}{16} & \frac{139}{32} & \frac{17}{32} & \frac{139}{32} & \frac{17}{32} \\
 2 & 1 & \frac{11}{4} & \frac{53}{16} & \frac{13}{16} & \frac{53}{16} & \frac{17}{32} & \frac{139}{32} & \frac{17}{32} & \frac{139}{32} \\
\end{array}
\right).
$$
Computation shows that $ \operatorname{rank} \mathcal{M}(3) = \operatorname{rank} \mathcal{M}(2)= 4 $. Consequently, $ \mathcal{M}(3) $ is a flat extension of $ \mathcal{M}(2) $.
\end{example}
\begin{example}\label{MyexLabe3-11} Case where $ b <y $.

Let $ \beta^{(3)} $ be the a real doubly indexed finite sequence  defined by
$\beta_{00}=3$, $\beta_{10}=3$, $\beta_{01}=1$, $\beta_{20}=5$, $\beta_{11}=-3$, $\beta_{02}=9$,  $\beta_{30}=9$, $\beta_{21}=3$, $\beta_{12}=1$ and $\beta_{03}=1$. The two matrices associated to $ \beta^{(3)} $ are,
$$
\mathcal{M}(1)=\left(
\begin{array}{ccc}
 3 & 1 & 1 \\
 1 & 5 & -3 \\
 1 & -3 & 9 \\
\end{array}\right)\text{ and } B(2)=\left(
\begin{array}{ccc}
 5 & -3 & 9 \\
 9 & 3 & 1 \\
 3 & 1 & 1 \\
\end{array}\right).$$
Calculations show that $ \mathcal{M}(1)> 0 $  and $ \operatorname{rank}\mathcal{M}(1)= 3 $.

So,
\renewcommand{\arraystretch}{1.3}
$$ W = \mathcal{M}(1)^{-1} B (2) = \left(
\begin{array}{ccc}
 \frac{6}{11} & -\frac{19}{11} & \frac{38}{11} \\
 \frac{51}{22} & \frac{31}{22} & -\frac{9}{11} \\
 \frac{23}{22} & \frac{17}{22} & -\frac{6}{11} \\
\end{array}
\right) \text{ and } W^{T}\mathcal{M}(1)W=\left(
\begin{array}{ccc}
 \frac{294}{11} & \frac{70}{11} & \frac{91}{11} \\
 \frac{70}{11} & \frac{112}{11} & -\frac{147}{11} \\
 \frac{91}{11} & -\frac{147}{11} & \frac{327}{11} \\
\end{array}
\right).$$
We have, $b= \frac{91}{11}<y= \frac{112}{11}$,

 So according to the relation \eqref{MyEqLabe3-5}, we set $ C (2)=\left(
\begin{array}{ccc}
 \frac{305}{11} & \frac{70}{11} & \frac{112}{11} \\
 \frac{70}{11} & \frac{112}{11} & -\frac{147}{11} \\
 \frac{112}{11} & -\frac{147}{11} & \frac{4038}{121} \\
\end{array}
\right)$.

 Then the extension $ \mathcal{M}(2) $ of $ \mathcal{M}(1)$ is,
$$
\mathcal{M}(2)=\left(
\begin{array}{cccccc}
 3 & 1 & 1 & 5 & -3 & 9 \\
 1 & 5 & -3 & 9 & 3 & 1 \\
 1 & -3 & 9 & 3 & 1 & 1 \\
 5 & 9 & 3 & \frac{305}{11} & \frac{70}{11} & \frac{112}{11} \\
 -3 & 3 & 1 & \frac{70}{11} & \frac{112}{11} & -\frac{147}{11} \\
 9 & 1 & 1 & \frac{112}{11} & -\frac{147}{11} & \frac{4038}{121} \\
\end{array}
\right).$$
One can easily check that $ \mathcal{M}(2) \succeq 0 $ and the dependency relations between the columns are,
\begin{gather}
 XY= -\frac{19}{11}+\frac{31}{22}X+\frac{17}{22}Y \text{ and } Y^2=\frac{292}{121}-\frac{1269}{242}X-\frac{615}{242}Y+\frac{21}{11}X^2.\label{MyEqLabe3-16}
\end{gather}

Also the algebraic variety of $\mathcal{M}(2)$ is $\mathcal{V}=\{(x_i,y_i)\}_{i=1}^{i=4}$ where\\ $(x_1,y_1)\approx (-0,8078;1,813)$, $(x_2,y_2)~\approx~ (0,9523;-2,1455)$, $(x_3,y_3)\approx (1,1039;-0,5186)$ and $(x_4,y_4)\approx (3,0437;1,128)$.

Solving the Vandermonde system \eqref{MyEqLabe2-5}, we obtain the weights $ \rho_{1} \approx 1.44984$, $ \rho_{2} \approx 0.811033$, $ \rho_{3} \approx 0,438617 $ and $ \rho_{4} \approx  0.300505$ associated  to the atoms mentioned above respectively.

 Finally, the $4$-atomic measure of $ \beta^{(3)} $ is $ \mu = \sum\limits_{i = 1}^{4} \rho_{i} \delta_{\left(x_{i }, y_{i} \right)} $.
\end{example}

The functional calculation on the dependency relations between the columns \eqref{MyEqLabe3-16}, define the columns $X^3 $, $ X^2Y $, $ XY^2 $ and $ Y^3 $ as linear dependency functions of the leftmost columns respectively,
\begin{align*}
& X^3=-\frac{292}{231}X-\frac{19}{21}Y+\frac{478}{231}XY+\frac{423}{154}X^2+\frac{17}{42}X^2,\\
& X^2Y=-\frac{19}{11}X+\frac{31}{22}X^2+\frac{17}{22}XY,\\
& XY^2=-\frac{19}{11}Y+\frac{31}{22}XY^2+\frac{17}{22}Y^2,\\
&Y^3=\frac{292}{121}Y-\frac{1269}{242}XY-\frac{615}{242}Y^2+\frac{21}{11}X^2Y.
\end{align*}
With these definitions, we construct the extension $ \mathcal{M}(3) $ of $ \mathcal{M} (2) $ as mentioned in Remark \ref{MyremLabe3-9}, is
\renewcommand{\arraystretch}{1.5}
\begin{small}
$$\left(
\begin{array}{cccccccccc}
 3 & 1 & 1 & 5 & -3 & 9 & 9 & 3 & 1 & 1 \\
 1 & 5 & -3 & 9 & 3 & 1 & \frac{305}{11} & \frac{70}{11} & \frac{112}{11} & -\frac{147}{11} \\
 1 & -3 & 9 & 3 & 1 & 1 & \frac{70}{11} & \frac{112}{11} & -\frac{147}{11} & \frac{4038}{121} \\
 5 & 9 & 3 & \frac{305}{11} & \frac{70}{11} & \frac{112}{11} & \frac{134433}{1694} & \frac{6883}{242} & \frac{1410}{121} & \frac{555}{242} \\
 -3 & 3 & 1 & \frac{70}{11} & \frac{112}{11} & -\frac{147}{11} & \frac{6883}{242} & \frac{1410}{121} & \frac{555}{242} & \frac{13921}{2662} \\
 9 & 1 & 1 & \frac{112}{11} & -\frac{147}{11} & \frac{4038}{121} & \frac{1410}{121} & \frac{555}{242} & \frac{13921}{2662} & -\frac{116264}{14641} \\
 9 & \frac{305}{11} & \frac{70}{11} & \frac{134433}{1694} & \frac{6883}{242} & \frac{1410}{121} & \frac{62803791}{260876} & \frac{800410}{9317} & \frac{202793}{5324} & \frac{293}{484} \\
 3 & \frac{70}{11} & \frac{112}{11} & \frac{6883}{242} & \frac{1410}{121} & \frac{555}{242} & \frac{800410}{9317} & \frac{202793}{5324} & \frac{293}{484} & \frac{444431}{14641} \\
 1 & \frac{112}{11} & -\frac{147}{11} & \frac{1410}{121} & \frac{555}{242} & \frac{13921}{2662} & \frac{202793}{5324} & \frac{293}{484} & \frac{444431}{14641} & -\frac{36339363}{644204} \\
 1 & -\frac{147}{11} & \frac{4038}{121} & \frac{555}{242} & \frac{13921}{2662} & -\frac{116264}{14641} & \frac{293}{484} & \frac{444431}{14641} & -\frac{36339363}{644204} & \frac{930018189}{7086244} \\
\end{array}
\right).$$
\end{small}
The calculation shows  that $ \operatorname{rank} \mathcal{M}(3) = \operatorname{rank} \mathcal{M}(2)= 4 $, i.e. $ \mathcal{M}(3) $ is a flat extension of $ \mathcal{M}(2)$.


\begin{thebibliography}{99}

\bibitem{BaTe}
 C.~Bayer, J.~Teichmann. The proof of tchakaloff's theorem.  \textit{Proc. Amer. Math. Soc.} 2006, 134(10), 3035--3040. 

\bibitem{CuFi1}
4 R.~E.~Curto, L.~A.~Fialkow. Solution of the truncated complex moment problem for flat data.  \textit{ Mem. Amer. Math. Soc.} 1996, 568(119).
 
\bibitem{CuFi4}
R.~E.~Curto, L.~A.~Fialkow. Solution of the singular quartic moment problem .  \textit{J. Operator Theory.} 2002, 48(2),315--354.

\bibitem{CuFi6}
R.E.~Curto, L.~A.~Fialkow.  Recursively determined representing measures for bivariate truncated moment sequences. \textit{J. Operator Theory.} 2013, 70(2), 401--436.     

\bibitem{CuFiM}
R.~E.~Curto, L.~A.~Fialkow, H.~M.~Môller.  The extremal truncated moment problem.  \textit{Integral Equations Operator Theory.} 2008, 60(2), 177--200.

\bibitem{CuYo1}
R.~E.~Curto, S.~Yoo, Non-extremal sextic moment problems.  \textit{J. Funct. Anal.} 2015,  269(3), 758--780.  

\bibitem{CuYo2}
R.~E.~Curto, S.~Yoo.  Concrete solution to the nonsingular quartic binary moment problem.  \textit{Proc. Amer. Math. Soc.} 2016, 144(1), 249--258. 

\bibitem{CuYo4}
R.~E.~Curto, S.~Yoo. A new approach to the nonsingular cubic binary moment problem.   \textit{Ann. Funct. Anal.} 2018, 9(4),525--536.     

\bibitem{Doug}
R.~G.~Douglas. On majorization and range inclusion of operators in Hilbert space.  Proc. {A}mer. {Math}. {S}oc.  1966; 17(2): 413--416.

\bibitem{EHIZ}
H.~El Azhatr, A.~Harrat, K.~Idrissi, H.~Zerouali. The quintic complex moment problem. \textit{Oper. Matrices.} 2019, 13(4),1003--1022. 

\bibitem{Fia}
L.~Fialkow. Truncated multivariable moment problems with finite varriety.  \textit{J. Operator Theory.}  2008, 60(2), 343--377.

\bibitem{FiaNi}
L.~Fialkow, J.~Nie. Positivity of riesz functionals and solutions of quadratic and quartic moment problems. \textit{J. Funct. Anal. } 2010,  258(1), 328--356. 

\bibitem{Kim1}
D.~P.~Kimsey.  \textit{Matrix-valued moment problems}. Ph.D. Thesis, ProQuest LLC, Ann Arbor, (Drexel University, 2011).

\bibitem{Kim2}
D.~P.~Kimsey.  The cubic complex moment problem.  \textit{Integral Equations Operator Theory.} 2014, 80(3), 353--378. 

\bibitem{Smu}
Y.~L.~Shmul’jan.  An operator hellinger integral. (in Russian).  \textit{Sb. Math.} 1959,  91(4),  381--430.

\bibitem{Yoo1}
S.~Yoo.   \textit{Extremal sextic truncated moment problems}. Ph.D. Thesis, (The University of Iowa, 2010).

\end{thebibliography}
\end{document}